\sloppy\pagestyle{plain}
\newtheorem{theorem}[equation]{Theorem}
\newtheorem{proposition}[equation]{Proposition}
\newtheorem{lemma}[equation]{Lemma}
\newtheorem{corollary}[equation]{Corollary}
\theoremstyle{definition}
\newtheorem{definition}[equation]{Definition}
\theoremstyle{remark}
\newtheorem{remark}[equation]{Remark}
\makeatletter\@addtoreset{equation}{section} \makeatother
\begin{document}

\title{On some Fano--Enriques threefolds}

\thanks{The work was partially supported by
RFFI grant No. 08-01-00395-a and grant N.Sh.-1987.2008.1.}

\author{Ilya Karzhemanov}

\begin{abstract}
We give a classification of Fano threefolds $X$ with canonical
Gorenstein singularities such that $X$ possess a regular
involution, which acts freely on some smooth surface in $|-K_X|$,
and the linear system $|-K_X|$ gives a morphism which is not an
embedding. From this classification one gets, in particular, a
description of some natural class of Fano--Enriques threefolds.
\end{abstract}

\sloppy

\maketitle

\section{Introduction}
\label{section:introduction} In this article we use the following
\begin{definition}
\label{theorem:Fano-Enriques-def} Three-dimensional normal
projective variety $W$ with canonical singularities is called a
\emph{Fano--Enriques threefold} if the canonical divisor $K_{W}$
is not Cartier, but $-K_{W} \sim_{\mathbb{Q}} H$ for some ample
Cartier divisor $H$. The number $g: = \frac{1}{2}H^{3} + 1$ is
called \emph{genus} of $W$.
\end{definition}

In \cite{Fano} G. Fano studied three-dimensional normal projective
varieties $W$ with general hyperplane section $H$ which is a
smooth Enriques surface (see also \cite{Godeaux}). Such varieties
are always singular (see \cite{Conte-Murre}). Moreover, according
to \cite{Cheltsov-FE}, if singularities of $W$ are worse than
canonical, then $W$ is a cone over $H$. Hence, from the view point
of classification, the case when $W$ has canonical singularities
is of the main interest. If this holds, then by \cite{Cheltsov-FE}
$W$ is a Fano--Enriques threefold with isolated singularities such
that $-K_{W} \sim_{\mathbb{Q}} H$. In \cite{Fano} G. Fano was able
to obtain only partial description of such varieties (see also
\cite{Conte}, \cite{Conte-Murre}).

A new approach became possible due to the Minimal Model Program
(see \cite{Clemens-Kollar-Mori}). First of all, according to
Propositions 3.1 and 3.3 in \cite{Prokhorov-Enriques}, general
element $H_{0} \in |H|$ on a Fano--Enriques threefold $W$ has only
Du Val singularities and the minimal resolution of $H_{0}$ is a
smooth Enriques surface. From this one can deduce that
$2(K_{W}+H_{0}) \sim 0$ on $W$ (see \cite{Cheltsov-FE}). Further,
take a global log canonical cover $\pi: X \longrightarrow W$ with
respect to $K_{W} + H_{0}$ (see, for example, \cite{Kawamata}).
Here morphism $\pi$ has degree $2$ and $\pi^{*}(K_{W}+H_{0}) \sim
0$. Moreover, $\pi$ is ramified exactly at those points on $W$
where $K_{W}$ is not Cartier. Since $W$ has canonical
singularities, the number of such points is finite. In particular,
we obtain that $-K_{X} \sim \pi^{*}(H_{0})$ and $X$ is a Fano
threefold with canonical Gorenstein singularities and degree
$-K_{X}^{3} = 4g-4$. Furthermore, Galois involution of the double
cover $\pi$ induces an automorphism $\tau$ on $X$ of order $2$
such that $\tau$ acts freely in codimension $2$ and $W = X/\tau$.

The above construction has lead to the complete description of
Fano--Enriques threefolds with terminal cyclic quotient
singularities (see \cite{Bayle}, \cite{Sano}). Now let $W$ be a
Fano--Enriques threefold with isolated singularities. According to
\cite[Corollary 3.7]{Prokhorov-Enriques}, if $H^{3} \ne 2$, then
general element $H_{0} \in |H|$ is a smooth Enriques surface. In
this case on the corresponding Fano threefold $X$ there is a
$\tau$-invariant smooth $\mathrm{K3}$ surface $\pi^{*}(H_{0}) \in
|-K_{X}|$ with a free action of $\tau$.

\renewcommand{\thefootnote}{1)}

The main result of the present paper is the following
\begin{theorem}
\label{theorem:main} Let $X$ be a Fano threefold with canonical
Gorenstein singularities and $S \in |-K_{X}|$ be a smooth
$\mathrm{K3}$ surface. Suppose that there is an action of regular
involution $\tau$ on $X$ such that $\tau(S) = S$ and $\tau$ does
not have fixed points on $S$. Then
\begin{itemize}
\item the factor $X/\tau$ is a Fano--Enriques threefold with isolated singularities;
\item the linear system $|-K_{X}|$ does not have base points;
\item if the morphism $\varphi_{\scriptscriptstyle|-K_{X}|}$\footnote{for a linear system $\mathcal{L}$ we denote by
$\varphi_{\scriptscriptstyle\mathcal{L}}$ corresponding rational
map.} is not an embedding, then one has the following
possibilities:
\begin{enumerate}
\item \label{A-case} $X$ is the intersection of a quartic and a quadric in $\mathbb{P}(1,1,1,1,1,2)$, $-K_{X}^{3} = 4$;
\item \label{B-case} $X$ is the image of threefold $V$, which is a double cover of the
scroll $\mathbb{F}(d_{1},d_{2},d_{3}): =
\mathrm{Proj}\left(\bigoplus_{i=1}^{3}\mathcal{O}_{\mathbb{P}^{1}}(d_{i})\right)$
with ramification at some divisor in the linear system $|4M -
2(2-\sum_{i=1}^{3}d_{i})L|$, where $M$ is the class of
tautological divisor on $\mathbb{F}(d_{1},d_{2},d_{3})$ and $L$ is
the class of a fibre of the natural projection
$\mathbb{F}(d_{1},d_{2},d_{3}) \longrightarrow \mathbb{P}^{1}$,
under birational morphism, given by multiple anticanonical linear
system $|-rK_{V}|$, $r \gg 0$. Furthermore, for
$\left(d_{1},d_{2},d_{3}, -K_{X}^{3}\right)$ only the following
values are possible:
\begin{eqnarray}
\nonumber
(2,1,1,8),\hspace{10pt}
(2,2,2,12),\hspace{10pt}
(2,2,0,8),\hspace{10pt}
(3,1,0,8),\hspace{10pt}
(3,3,0,12),\hspace{10pt}
(4,2,0,12),\\
\nonumber
(4,4,0,16),\hspace{10pt}
(5,3,0,16),\hspace{10pt}
(6,4,0,20),\hspace{10pt}
(7,5,0,24),\hspace{10pt}
(8,6,0,28),\hspace{10pt}
\end{eqnarray}
\end{enumerate}
\end{itemize}
and each of the cases in \ref{A-case}) and in \ref{B-case}) does
occur.
\end{theorem}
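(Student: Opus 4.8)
The plan is to establish the three bulleted assertions in turn, reserving the bulk of the work for the classification in the third.

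I would first analyze the fixed locus of $\tau$. Since $-K_X$ is ample and $S\in|-K_X|$, any $\tau$-fixed curve $C$ would satisfy $C\cdot S=-K_X\cdot C>0$ and hence meet $S$ in a $\tau$-fixed point, contradicting freeness of $\tau$ on $S$; thus $\mathrm{Fix}(\tau)$ is a finite set disjoint from $S$. Consequently $\pi:X\to W:=X/\tau$ is étale in codimension one, $W$ carries only the isolated quotient singularities lying under $\mathrm{Fix}(\tau)$ (at which $K_W$ fails to be Cartier, the local index being $2$), while $H:=\pi(S)=S/\tau$ is a smooth Enriques surface which is ample Cartier and satisfies $-K_W\sim_{\mathbb{Q}}H$. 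This exhibits $W$ as a Fano--Enriques threefold with isolated singularities, reversing the construction of the Introduction. For the second bullet I would use the restriction sequence
\[
0\longrightarrow \mathcal{O}_X\longrightarrow \mathcal{O}_X(-K_X)\longrightarrow \mathcal{O}_S(L)\longrightarrow 0,\qquad L:=(-K_X)|_S,
\]
together with $H^1(X,\mathcal{O}_X)=0$ (Kawamata--Viehweg vanishing), which gives the surjection $H^0(X,-K_X)\twoheadrightarrow H^0(S,L)$ and hence $\mathrm{Bs}\,|-K_X|\subseteq\mathrm{Bs}\,|L|$. Since $L$ is ample on the $\mathrm{K3}$ surface $S$ with $L^2=-K_X^3>0$, Saint-Donat's theorem makes $|L|$ base-point-free; the single exceptional configuration (an elliptic class $E$ with $E^2=0$, $E\cdot L=1$) is excluded because $L$ extends to the ample class $-K_X$ on $X$. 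Hence $|-K_X|$ has no base points.

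Now suppose $\varphi:=\varphi_{|-K_X|}$ is not an embedding. Its restriction to $S$ is $\varphi_{|L|}$, and the surjectivity above shows that $\varphi(S)$ spans a hyperplane and coincides with the projective model of $(S,L)$. A non-embedding $\varphi$ forces $L$ to be non-very-ample, so by Saint-Donat the polarized surface $(S,L)$ is hyperelliptic and $\varphi_{|L|}$ is generically $2:1$ onto a surface $\bar S$ of minimal degree. I would then propagate this downstairs: $\varphi$ is generically $2:1$ onto $\overline X:=\varphi(X)\subset\mathbb{P}^{N}$, $N=\tfrac12(-K_X^3)+2$, a threefold of minimal degree $\tfrac12(-K_X^3)=N-2$ whose general hyperplane section is $\bar S$. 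By the del Pezzo--Bertini classification of minimal-degree varieties, $\overline X$ is either a quadric $Q^3\subset\mathbb{P}^4$ or a (possibly conical) rational normal scroll, the cone over the Veronese surface and the degenerate low-degree models being ruled out by the standing hypotheses. The quadric leads to case \ref{A-case}), the scrolls to case \ref{B-case}).

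In the quadric case $\varphi$ is a double cover of $Q^3$ branched along a quartic section; recording the covering involution through an auxiliary coordinate of weight $2$ realizes $X$ as the complete intersection of a quadric and a quartic in $\mathbb{P}(1,1,1,1,1,2)$ with $-K_X^3=4$, which is case \ref{A-case}). In the scroll case I would take $\overline X$ to be the image of $\mathbb{F}(d_1,d_2,d_3)$ under $|M|$ and lift $\varphi$ to a double cover $\mu:V\to\mathbb{F}(d_1,d_2,d_3)$, arranging $V$ to be a weak Fano threefold whose multiple anticanonical morphism $\varphi_{|-rK_V|}$ ($r\gg0$) contracts the residual locus to produce $X$. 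Computing $K_{\mathbb{F}}$, applying the double-cover formula $K_V=\mu^{*}\!\left(K_{\mathbb{F}}+\tfrac12 R\right)$, and demanding that $\overline X$ be polarized by $M$ and that $-K_V$ be nef and big then force the branch divisor $R$ into the system $|4M-2(2-\sum_i d_i)L|$ displayed in \ref{B-case}), while comparison of degrees gives the identity $\sum_i d_i=\tfrac12(-K_X^3)$.

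The heart of the argument, and the step I expect to be the main obstacle, is the enumeration of admissible triples $(d_1,d_2,d_3)$. Here I would impose the ordering $d_1\ge d_2\ge d_3\ge 0$; effectivity of $R\in|4M-2(2-\sum_i d_i)L|$; nef-and-bigness of $-K_V$ read off from the nef cone of $\mathbb{F}(d_1,d_2,d_3)$, which is sensitive both to how unbalanced the splitting is and to whether the scroll is conical ($d_3=0$ or $d_2=d_3=0$); and the condition that the singularities of $R$, hence of $V$ and of $X$, be no worse than canonical Gorenstein. Together these bound the spread $d_1-d_3$ and the degree $-K_X^3$, leaving exactly the eleven tuples listed. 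The delicate points will be the boundary (conical) scrolls and the verification that the resulting $V$ genuinely has canonical Gorenstein singularities with anticanonical model a Fano threefold of the asserted degree, so that the numerically admissible but geometrically impossible triples are discarded. Finally, to show each case occurs I would exhibit explicit branch divisors in \ref{B-case}), and the explicit $(2,4)$-complete intersection in \ref{A-case}), and check directly that each carries a smooth $\mathrm{K3}$ member of $|-K_X|$ together with a fixed-point-free involution $\tau$, so that all hypotheses of the theorem are met.
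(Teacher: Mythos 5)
Your overall route (quotient analysis, base-point freeness via the K3, minimal-degree anticanonical image, double covers of scrolls, enumeration plus existence) parallels the paper's, but the decisive step --- the enumeration --- has a genuine gap: every criterion you propose (ordering of the $d_i$, effectivity of the branch class, nef-and-bigness of $-K_V$, canonical singularities of the double cover) is independent of the involution $\tau$, so what these criteria carve out is precisely the class of hyperelliptic Fano threefolds over scrolls, i.e.\ Table 1 of Przyjalkowski--Cheltsov--Shramov, which is strictly larger than the eleven tuples. Concretely, $(d_1,d_2,d_3)=(1,1,1)$ --- the double cover of $\mathbb{P}^1\times\mathbb{P}^2$ branched in a divisor of bidegree $(2,4)$, a smooth hyperelliptic Fano with $-K_X^3=6$ --- passes all of your tests but is not on the list. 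The paper removes such cases by two $\tau$-dependent arguments you do not have: first, $-K_X^3=2H^3$ on the Enriques quotient and $H^3$ is even by Prokhorov, so $-K_X^3\equiv 0\pmod 4$ (Lemma~\ref{theorem:degree-is-divisible-by-four}); second, the general branch divisor must be irreducible (Lemma~\ref{theorem:irreducible}), which is proved by lifting $\sigma=f(\tau)$ to a biregular involution of $\mathbb{F}(d_1,d_2,0)$ through a relative-minimal-model/flop argument (Lemma~\ref{theorem:induced-action}), showing all $|aM+bL|$ are $\sigma$-invariant, and producing forbidden $\sigma$-fixed points in $D'\cap M_S$ on the two invariant fibres whenever the branch divisor contains the surface $x_0=0$. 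The same criticism applies to your one-line dismissal of the cone over the Veronese surface: there $X$ is a sextic hypersurface in $\mathbb{P}(1,1,1,2,3)$ with $-K_X^3=8$, divisible by $4$, so no numerical or singularity condition excludes it; the paper needs all of Lemma~\ref{theorem:degree-eight}, writing the lifted involution explicitly in weighted coordinates and checking that every admissible pair of invariant equations for $X$ and $S$ forces a $\tau$-fixed point on $S$. ``Ruled out by the standing hypotheses'' is exactly the statement requiring proof.

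Two further steps are asserted with incorrect justifications. For the first bullet you must show that $X/\tau$ has \emph{canonical} singularities, as Definition~\ref{theorem:Fano-Enriques-def} demands; a quotient \'etale in codimension one preserves log terminality (Proposition 6.7 of Clemens--Koll\'ar--Mori) but not canonicity in general (the klt singularity $\frac{1}{4}(1,1,1)$ is such a quotient of a smooth point and is not canonical), and the $\tau$-fixed points may moreover sit at singular points of $X$, so ``isolated quotient singularities'' is not an adequate description. The paper argues that a worse-than-canonical $W$ would be a cone by Cheltsov's result, exhibiting a divisor of discrepancy $-1$ and contradicting log terminality. For base-point freeness, your exclusion of the Saint-Donat configuration ``because $L$ extends to the ample class $-K_X$'' is not an argument: unigonal polarized $\mathrm{K3}$ surfaces with $L$ ample exist, and nothing a priori prevents $(S,-K_X|_S)$ from being one, since $\mathrm{Pic}(X)\to\mathrm{Pic}(S)$ is far from surjective. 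The exclusion again needs $\tau$: by Shin's theorem $\mathrm{Bs}|-K_X|$ is a point or a $\mathbb{P}^1$, it lies in $S$ and is $\tau$-invariant, hence contains a $\tau$-fixed point, contradicting freeness of $\tau$ on $S$ --- this is the paper's Lemma~\ref{theorem:base-point-free}, and the same observation repairs your restriction-sequence version, where the would-be base curve $\Gamma$ is $\tau$-invariant and rational.
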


From Theorem~\ref{theorem:main} and the above arguments we obtain

\begin{corollary}
\label{theorem:reduction-to-embeddings} Let $W$ be a
three-dimensional normal projective variety with general
hyperplane section which is a smooth Enriques surface. If $W$ has
canonical singularities, then it is a factor of some Fano
threefold $X$ with canonical Gorenstein singularities by the
action of regular involution on $X$, which acts freely on some
smooth surface in $|-K_{X}|$, so that one of the following holds:
\begin{itemize}
\item $X$ is one of the threefolds from Theorem~\ref{theorem:main};
\item the linear system $|-K_{X}|$ gives an embedding.
\end{itemize}
\end{corollary}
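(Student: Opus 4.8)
The plan is to run the construction sketched in the introduction in order to place ourselves under the hypotheses of Theorem~\ref{theorem:main}, and then to read off the conclusion from it. First I would note that the standing assumptions on $W$ are precisely those under which the quoted structural results apply: since $W$ is normal with canonical singularities and its general hyperplane section is a smooth Enriques surface, \cite{Cheltsov-FE} shows that $W$ is a Fano--Enriques threefold with isolated singularities and $-K_W \sim_{\mathbb{Q}} H$, while Propositions~3.1 and~3.3 of \cite{Prokhorov-Enriques} give that a general $H_0 \in |H|$ is a smooth Enriques surface with $2(K_W + H_0) \sim 0$.

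Next I would produce the pair $(X,\tau)$. Taking the global log canonical double cover $\pi\colon X \to W$ attached to $K_W + H_0$ (cf.\ \cite{Kawamata}) yields $\pi^{*}(K_W+H_0)\sim 0$, hence $-K_X \sim \pi^{*}(H_0)$, so that $X$ is a Fano threefold with canonical Gorenstein singularities carrying the Galois involution $\tau$ with $W = X/\tau$. Writing $S := \pi^{*}(H_0)\in|-K_X|$, everything then reduces to verifying that $S$ is a smooth $\mathrm{K3}$ surface with $\tau(S)=S$ and with $\tau$ acting freely on $S$.

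The one genuine point here, and the step I expect to require care, is the freeness of $\tau$ on $S$. By construction $\pi$ ramifies only over the finitely many non-Cartier points of $W$; since $H_0$ is a general member of $|H|$ it avoids these points, so $\pi$ is \'etale in a neighbourhood of $S$ and $S \to H_0$ is a connected \'etale double cover of a smooth Enriques surface. Hence $S$ is a smooth $\mathrm{K3}$ surface; the equality $\tau(S)=S$ is clear from the $\tau$-invariance of $\pi$; and any fixed point of $\tau|_S$ would have to lie over the branch locus that $S$ misses, so $\tau$ acts freely on $S$. Thus $(X,S,\tau)$ meets every hypothesis of Theorem~\ref{theorem:main}.

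It then only remains to invoke Theorem~\ref{theorem:main}. It immediately gives that $X/\tau = W$ is a Fano--Enriques threefold with isolated singularities and that $|-K_X|$ is base-point free, so that $\varphi_{\scriptscriptstyle|-K_X|}$ is a morphism. Splitting according to whether this morphism is an embedding, the theorem's classification of the non-embedding case forces $X$ to be one of the threefolds listed in \ref{A-case}) and \ref{B-case}). This is exactly the dichotomy asserted by the corollary, so no further argument is needed.
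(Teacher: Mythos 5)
Your proposal is correct and follows essentially the same route as the paper: the corollary is stated there without a separate proof precisely because it is the introduction's log canonical cover construction combined with Theorem~\ref{theorem:main}, which is exactly what you carry out (including the key verification that $S=\pi^{*}(H_{0})$ is a smooth $\mathrm{K3}$ surface on which $\tau$ acts freely). One small strengthening of your freeness step: generality of $H_{0}$ alone would not rule out a non-Cartier point being a base point of $|H|$, but since $H_{0}$ is a \emph{smooth Cartier} divisor, $W$ is smooth --- hence $K_{W}$ is Cartier --- at every point of $H_{0}$, so $H_{0}$ automatically avoids the whole ramification locus of $\pi$.
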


\begin{remark}
\label{remark:q-smoothing-and-ampleness} In that case when
Fano--Enriques threefold $W$ has terminal singularities there
exists a flat deformation of $W$ to Fano--Enriques threefold with
terminal cyclic quotient singularities (see \cite{Minagawa}). For
Fano threefolds $X$ in case \ref{B-case}) of
Theorem~\ref{theorem:main}, which correspond to
$\mathbb{F}(d_{1},d_{2},0)$, the same result for $W = X/\tau$ is
not known. For such $W$ it is not known also if the linear system
$|H|$ is very ample.
\end{remark}

The author would like to thank Yu. G. Prokhorov for setting the
problem and for his attention to this paper. Also the author would
like to thank I. A. Cheltsov, V. A. Iskovskikh, K. A. Shramov and
V. S. Zhgun for helpful discussions.

\section{Preliminaries}
\label{section:preliminaries} We use standard notions and facts
from the theory of minimal models and Fano varieties (see
\cite{Iskovskikh-Prokhorov}, \cite{Clemens-Kollar-Mori},
\cite{Kollar-Mori}). All varieties are assumed to be projective
and defined over $\mathbb{C}$. In what follows $X$ is a threefold
from Theorem~\ref{theorem:main}.
\begin{lemma}
\label{theorem:factor-Fano-Enriques} Factor $X/\tau$ is a
Fano--Enriques threefold with isolated singularities.
\end{lemma}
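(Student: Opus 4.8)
The plan is to study the finite degree-two quotient morphism $\pi \colon X \to W = X/\tau$, first pinning down the singular locus of $X$ and the fixed locus of $\tau$, and then verifying in turn each clause of Definition~\ref{theorem:Fano-Enriques-def} together with the isolatedness of $\mathrm{Sing}(W)$. The geometric input that drives everything is that $S$ is a \emph{smooth} ample divisor avoiding the fixed locus.

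First I would show that both $\mathrm{Sing}(X)$ and $\mathrm{Fix}(\tau)$ are finite. Since $X$ is Gorenstein, $S \in |-K_X|$ is an ample Cartier divisor. If a point $p \in S$ were singular on $X$, then $\dim T_p X \geq 4$, so $\dim T_p S \geq \dim T_p X - 1 \geq 3 > 2$, contradicting the smoothness of $S$; hence $S$ lies in the smooth locus of $X$. As $S$ is ample it meets every curve and every divisor, so the closed sets $\mathrm{Sing}(X)$ and $\mathrm{Fix}(\tau)$ — disjoint from $S$ (the first by the previous sentence, the second by hypothesis) — cannot contain a positive-dimensional component, and thus are finite. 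Consequently $W$ is a normal projective threefold, $\pi$ is \'etale between smooth points away from the finite set $\pi(\mathrm{Sing}(X) \cup \mathrm{Fix}(\tau))$, so $\mathrm{Sing}(W)$ is contained in that finite set and $W$ has isolated singularities. Moreover $\mathrm{Fix}(\tau)$ has codimension $\geq 2$, so $\tau$ acts freely in codimension one, giving $K_X = \pi^{*}K_W$; by the standard behaviour of canonical singularities under quotients that are \'etale in codimension one (see \cite{Kollar-Mori}) the variety $W$ then has canonical singularities.

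Next I would establish the $\mathbb{Q}$-Fano condition. Set $H_0 := \pi(S) = S/\tau$. Since $\tau$ acts freely on the smooth K3 surface $S$, the surface $H_0$ is a smooth Enriques surface; and since $S$ avoids $\mathrm{Sing}(X) \cup \mathrm{Fix}(\tau)$, the divisor $H_0$ lies in the smooth, hence factorial, locus of $W$, so it is an ample Cartier divisor with $\pi^{*}H_0 = S$. From $S \sim -K_X \sim -\pi^{*}K_W$ one gets $\pi^{*}(K_W + H_0) \sim 0$, and applying $\pi_{*}$ (which acts as multiplication by $2$ on divisor classes) yields $2(K_W + H_0) \sim 0$. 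Restricting via adjunction $K_{H_0} = (K_W + H_0)|_{H_0}$ and using that the canonical class of an Enriques surface is nonzero but $2$-torsion, I conclude that $K_W + H_0$ is a nonzero $2$-torsion class, whence $-K_W \sim_{\mathbb{Q}} H_0$.

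It remains to show that $K_W$ is not Cartier, which is where I expect the main difficulty. I would first rule out $\mathrm{Fix}(\tau) = \emptyset$: if $\tau$ acted freely, $\pi$ would be an \'etale double cover with $\pi_{*}\mathcal{O}_X = \mathcal{O}_W \oplus \mathcal{L}$ for a numerically trivial $\mathcal{L}$, giving $\chi(\mathcal{O}_X) = 2\chi(\mathcal{O}_W)$; but Kawamata--Viehweg vanishing on the rational Fano threefold $X$ and on the $\mathbb{Q}$-Fano threefold $W$ forces $\chi(\mathcal{O}_X) = \chi(\mathcal{O}_W) = 1$, a contradiction. So there is an isolated fixed point $p$; when $p$ is a smooth point of $X$ the involution linearizes to $-\mathrm{id}$ on $T_pX$, so $\tau$ acts by $-1$ on the dualizing generator and $W$ acquires a $\tfrac12(1,1,1)$ quotient singularity at $\pi(p)$, at which $K_W$ fails to be Cartier. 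The step requiring the most care is the local analysis at fixed points that happen to lie on $\mathrm{Sing}(X)$: there one must still check that $\tau$ acts nontrivially on the local dualizing sheaf, so that $K_W$ remains non-Cartier; I expect this to follow from the odd-dimensionality of $X$ together with the structure of the canonical Gorenstein singularity carrying the involution, but it is the most delicate point of the argument.
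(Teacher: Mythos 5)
Your first half is fine and essentially matches the paper: $\mathrm{Sing}(X)$ and $\mathrm{Fix}(\tau)$ are finite because they avoid the smooth ample Cartier divisor $S$, hence $W$ has isolated singularities, $H_{0}=\pi(S)$ is a smooth Enriques surface which is ample and Cartier, and $2(K_{W}+H_{0})\sim 0$ gives $-K_{W}\sim_{\mathbb{Q}}H_{0}$ (the paper gets this by citing \cite[Remark 2.8]{Cheltsov-FE}). The genuine gap is the step you treat as routine: canonical singularities do \emph{not} descend under finite quotients that are \'etale in codimension one. The standard result (\cite[Proposition 6.7]{Clemens-Kollar-Mori}, or the corresponding statement in \cite{Kollar-Mori}) only gives that $W$ is \emph{log terminal}: for a divisor $E$ over $W$ whose pullback is ramified in the double cover, the discrepancies satisfy $a(\bar{E},X)+1=2\left(a(E,W)+1\right)$, so $X$ canonical only forces $a(E,W)\geqslant -\frac{1}{2}$. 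This bound is sharp: the cone over $\left(\mathbb{P}^{2},\mathcal{O}(3)\right)$ is a canonical Gorenstein singularity, the involution $v\mapsto -v$ on it is free outside the vertex, and the quotient is the cone over $\left(\mathbb{P}^{2},\mathcal{O}(6)\right)$, which has discrepancy $-\frac{1}{2}$ and is \emph{not} canonical. Closing exactly this gap is the entire content of the second paragraph of the paper's proof: knowing only that $W$ is log terminal, it invokes Cheltsov's theorem that a threefold as above with worse-than-canonical singularities must be the cone over $H$, i.e.\ there is a birational morphism $g:\mathbb{P}=\mathrm{Proj}\left(\mathcal{O}_{H}\oplus\mathcal{O}_{H}(H|_{H})\right)\longrightarrow W$ contracting the negative section $E$ with $K_{\mathbb{P}}=g^{*}(K_{W})-E$; then $a(E,W)=-1$ contradicts log terminality. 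Your proposal contains no substitute for this global input, and no purely formal argument can supply it.

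A second, smaller problem concerns non-Cartierness of $K_{W}$. Your Euler-characteristic argument correctly rules out a free action, and the $\frac{1}{2}(1,1,1)$ analysis at smooth fixed points is correct, but the step you flag as ``delicate'' is actually false as a local statement: for the ordinary double point $(xy=zw)$ with $\tau=-\mathrm{id}$ the fixed point is isolated, yet $\tau$ fixes the dualizing generator $\frac{dx\wedge dy\wedge dz}{z}$, and the quotient (the cone over $\mathbb{P}^{1}\times\mathbb{P}^{1}$ re-embedded by $\mathcal{O}(2,2)$) is Gorenstein. So no local analysis at a singular fixed point can show that $K_{W}$ fails to be Cartier there; a global argument is needed, e.g.: if $K_{W}$ were Cartier, then $K_{W}+H_{0}$ would be a nonzero $2$-torsion line bundle (nonzero by adjunction to the Enriques surface $H_{0}$), and the associated connected \'etale double cover $W''\longrightarrow W$ would satisfy $\chi(\mathcal{O}_{W''})=2\chi(\mathcal{O}_{W})=2$, contradicting Kawamata--Viehweg vanishing on the Fano threefold $W''$. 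To be fair, the paper subsumes this point into its citation of \cite{Cheltsov-FE}, but the route you propose for it provably cannot be completed as stated.
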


\begin{proof}
Set $W: = X/\tau$ and $\pi: X \longrightarrow W$ to be the
factorization morphism. Since $\tau$ acts freely on $S \in
|-K_{X}|$, $H: = \pi(S)$ is a smooth Enriques surface and an ample
divisor on $W$. In particular, singularities of $W$ are isolated.
From this we get $-2K_{W} \sim 2H$ (see \cite[Remark
2.8]{Cheltsov-FE}). Thus, it remains to show that $W$ has
canonical singularities.

By the above arguments $W$ is $\mathbb{Q}$-Gorenstein. Then,
according to \cite[Proposition 6.7]{Clemens-Kollar-Mori}, $W$ has
log terminal singularities. Suppose that singularities of $W$ are
worse than canonical. Then, according to \cite{Cheltsov-FE},
contraction of the negative section $E$ on the
$\mathbb{P}^{1}$-fibration $\mathbb{P}: =
\mathrm{Proj}\left(\mathcal{O}_{H}\oplus\mathcal{O}_{H}(H|_{H})\right)$
gives a birational morphism $g: \mathbb{P} \longrightarrow W$ such
that $K_{\mathbb{P}} = g^{*}(K_{W}) - E$. This implies that the
discrepancy $a(E, W)$ equals $-1$, which is a contradiction.
\end{proof}

\begin{lemma}
\label{theorem:degree-is-divisible-by-four} The degree
$-K_{X}^{3}$ is divisible by $4$.
\end{lemma}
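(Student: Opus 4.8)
The plan is to reduce the computation of $-K_X^3$ to a self-intersection number on the Enriques quotient surface and then to exploit the evenness of its intersection form, the quotient contributing one factor of $2$ and the even lattice contributing a second.

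First I would use the factorization morphism $\pi\colon X\longrightarrow W$ from Lemma~\ref{theorem:factor-Fano-Enriques}, which is finite of degree $2$, together with the fact that the ramification of $\pi$ is supported at the finitely many non-Gorenstein points of $W$, none of which lie on $H=\pi(S)$ (these points are fixed by $\tau$, whereas $\tau$ acts freely on $S$). Hence $\pi$ is étale in a neighbourhood of $S$, so $\pi^{*}H=S$ as divisors and therefore $-K_X\sim S\sim\pi^{*}H$, where $H$ is the ample Cartier divisor of the Fano--Enriques threefold $W$. By the projection formula for the finite morphism $\pi$ this gives $-K_X^3=(\pi^{*}H)^3=\deg(\pi)\cdot H^3=2H^3$.

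Next, restricting $H$ to the smooth surface $H$ itself yields $H^3=(H|_H)^2$, so that the statement reduces to showing that the self-intersection of $M:=H|_H$ on the Enriques surface $H$ is even. For this I would apply Riemann--Roch on $H$, namely $\chi(\mathcal{O}_H(M))=\chi(\mathcal{O}_H)+\tfrac12\,M\cdot(M-K_H)$. Since $H$ is an Enriques surface one has $\chi(\mathcal{O}_H)=1$ and $K_H$ is numerically trivial (because $2K_H\sim 0$), so $M\cdot K_H=0$ and $\chi(\mathcal{O}_H(M))=1+\tfrac12 M^2$. Integrality of $\chi(\mathcal{O}_H(M))$ forces $M^2\in 2\mathbb{Z}$, whence $-K_X^3=2M^2\equiv 0\pmod 4$.

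The only points requiring care are the identification $-K_X\sim\pi^{*}H$ (equivalently the étaleness of $\pi$ along $S$) and the standard numerical properties $\chi(\mathcal{O}_H)=1$ and $K_H$ numerically trivial of an Enriques surface; everything else is bookkeeping. I do not anticipate a genuine obstacle. It is worth noting that working directly on the $\mathrm{K3}$ surface $S$, where $-K_X^3=(-K_X|_S)^2$ is an even number since the intersection form of a $\mathrm{K3}$ surface is even, would only yield divisibility by $2$; the passage to the Enriques quotient is precisely what supplies the second factor of $2$.
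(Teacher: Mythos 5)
Your proposal is correct, and its skeleton coincides with the paper's: both arguments reduce the claim to $-K_X^3=(\pi^*H)^3=2H^3$ via the degree-two quotient morphism $\pi\colon X\to W=X/\tau$, and then need the evenness of $H^3$. The difference lies in how that evenness is obtained. The paper simply cites \cite[Lemma 2.2]{Prokhorov-Enriques}, which asserts that $H^3$ is even for an arbitrary Fano--Enriques threefold; you instead prove it on the spot, restricting $H$ to the smooth Enriques surface $H=\pi(S)$ itself and applying Riemann--Roch, $\chi(\mathcal{O}_H(M))=1+\tfrac12 M^2$ with $M=H|_H$ (using $\chi(\mathcal{O}_H)=1$ and $K_H$ numerically trivial), so that integrality of $\chi$ forces $M^2\in 2\mathbb{Z}$. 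Your route is self-contained and elementary, but note that it leans on the special feature of this situation that $\pi(S)$ is a \emph{smooth} Enriques surface which is moreover Cartier along its support (both facts follow, as you indicate, from $\tau$ acting freely on $S$, so that $\pi$ is \'etale near $S$ and Cartier-ness descends); the cited lemma of Prokhorov covers the general Fano--Enriques case, where a general member of $|H|$ need only have Du Val singularities, at the price of being a black box here. Your closing observation that working on the $\mathrm{K3}$ surface $S$ alone only yields divisibility by $2$, and that the passage to the Enriques quotient supplies the second factor of $2$, correctly identifies where the strength of the hypothesis is used.
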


\begin{proof}
In the notation from the proof of
Lemma~\ref{theorem:factor-Fano-Enriques}, for the ample Cartier
divisor $H = \pi(S)$ on a Fano--Enriques threefold $W$ we have
$-K_{W} \sim_{\mathbb{Q}} H$. In particular, we get: $-K_{X}^{3} =
\pi^{*}(H)^{3} = 2H^{3}$. On the other hand, according to
\cite[Lemma 2.2]{Prokhorov-Enriques}, $H^{3}$ is divisible by $2$.
Thus, $-K_{X}^{3}$ is divisible by $4$.
\end{proof}
\renewcommand{\thefootnote}{2)}

\begin{lemma}
\label{theorem:base-point-free} The linear system $|-K_{X}|$ does
not have base points.
\end{lemma}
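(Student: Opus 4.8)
\emph{The plan is to} restrict the anticanonical system to the K3 surface $S$, reduce base-point-freeness of $|-K_{X}|$ to that of the restricted system on $S$, and then exploit the free involution $\tau$. First I would use that $S \in |-K_{X}|$, so $-K_{X} - S \sim 0$ and the restriction sequence
\[
0 \longrightarrow \mathcal{O}_{X} \longrightarrow \mathcal{O}_{X}(-K_{X}) \longrightarrow \mathcal{O}_{S}(L) \longrightarrow 0
\]
holds with $L := -K_{X}|_{S} = S|_{S}$. Since $X$ is a Fano threefold with canonical Gorenstein (hence rational) singularities, Kawamata--Viehweg vanishing gives $H^{1}(X, \mathcal{O}_{X}) = 0$, so the restriction map $H^{0}(X, -K_{X}) \to H^{0}(S, L)$ is surjective. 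Because $S$ is itself a member of $|-K_{X}|$, every base point of $|-K_{X}|$ lies on $S$, and surjectivity then yields $\mathrm{Bs}\,|-K_{X}| = \mathrm{Bs}\,|L|$. Thus it suffices to prove that the complete linear system $|L|$ on $S$ has no base points.

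Here $L$ is ample on the K3 surface $S$, being the restriction of the ample divisor $-K_{X}$, and $L^{2} = (-K_{X})^{3} = 4g - 4 > 0$. I would then invoke the classification of linear systems on K3 surfaces (Saint-Donat): for a nef line bundle $L$ with $L^{2} > 0$, if $|L|$ has a base point, then $L \sim aE + \Gamma$ with $E$ an irreducible curve satisfying $E^{2} = 0$, with $\Gamma$ a smooth rational $(-2)$-curve, and $E \cdot \Gamma = 1$; moreover $\Gamma$ is the \emph{fixed part} of $|L|$ and $\mathrm{Bs}\,|L| = \Gamma$. Note that ampleness of $L$ does not preclude this: one computes $a = 2g - 1$ and $L \cdot \Gamma = a - 2 > 0$, while still $h^{0}(S,L) = h^{0}(S,aE)$, so that $\Gamma$ is genuinely a fixed component.

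The final step is to derive a contradiction from the free involution. Since $\tau$ is regular with $\tau^{*}K_{X} = K_{X}$ and $\tau(S) = S$, the class $L$ is $\tau$-invariant, and hence the linear system $|L|$ and its base locus are $\tau$-invariant. Therefore $\tau(\Gamma) \subseteq \mathrm{Bs}\,|L| = \Gamma$, which forces $\tau(\Gamma) = \Gamma$. But then $\tau$ restricts to an involution of $\Gamma \cong \mathbb{P}^{1}$, which necessarily has a fixed point, contradicting the hypothesis that $\tau$ acts freely on $S$. Hence $|L|$, and with it $|-K_{X}|$, is base-point-free.

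The hard part will be pinning down the structural input precisely: one must be certain that a base point of $|L|$ on the ample K3 polarization forces exactly a single $\tau$-invariant irreducible $(-2)$-curve in the base locus, rather than an isolated base point or a pair of $(-2)$-curves swapped by $\tau$ (either of which might survive the freeness argument). This is exactly where the detailed Saint-Donat decomposition $L \sim aE + \Gamma$, with $\Gamma$ irreducible and equal to the whole base locus, is needed. I would also double-check that the genuinely \emph{hyperelliptic} situations (an elliptic $E$ with $L \cdot E = 1$ but no such fixed $(-2)$-curve) do yield base-point-free systems, since these correspond precisely to the non-embedding cases \ref{A-case}) and \ref{B-case}) that must survive this lemma.
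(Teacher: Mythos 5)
Your proof is correct, but it takes a genuinely different route from the paper. The paper quotes Shin's classification of the base locus of $|-K_X|$ on a Fano threefold with canonical Gorenstein singularities: $\mathrm{Bs}|-K_X|$ is either a single point or a curve isomorphic to $\mathbb{P}^1$, and in both cases the $\tau$-invariance of the base locus produces a $\tau$-fixed point on $S$ (a fixed point of an involution of $\mathbb{P}^1$ in the curve case), contradicting freeness. You instead derive the structure of the base locus yourself: restriction to $S$ via the sequence $0 \to \mathcal{O}_X \to \mathcal{O}_X(-K_X) \to \mathcal{O}_S(L) \to 0$, surjectivity of $H^0(X,-K_X)\to H^0(S,L)$ from $H^1(X,\mathcal{O}_X)=0$ (Kawamata--Viehweg vanishing plus rationality of canonical singularities), so that $\mathrm{Bs}|-K_X| = \mathrm{Bs}|L|$, and then Saint-Donat's theorem on the K3 surface, which says the base locus is either empty or exactly one irreducible $(-2)$-curve $\Gamma \cong \mathbb{P}^1$ in the unigonal decomposition $L \sim aE + \Gamma$. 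The endgame (a $\tau$-invariant $\mathbb{P}^1$ inside $S$ forces a fixed point) is the same as the paper's. What the two approaches buy: the paper's proof is shorter but leans on a threefold-specific result of Shin; yours is more classical and self-contained, uses only surface theory plus standard vanishing, and in fact gives slightly finer information (Saint-Donat rules out isolated base points outright, whereas the paper needs the involution to exclude the point case). Your attention to the possible pitfalls --- isolated base points, or two $(-2)$-curves interchanged by $\tau$ --- is well placed, and both are correctly dispatched by the irreducibility and uniqueness of $\Gamma$ in Saint-Donat's statement.
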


\begin{proof}
Suppose that $B: = \mathrm{Bs}|-K_{X}| \ne
\emptyset$.\footnote{for a linear system $\mathcal{L}$ we denote
by $\mathrm{Bs}(\mathcal{L})$ its base locus.} If $\dim B = 0$,
then, according to \cite{Shin}, $B$ is a point. We have $B =
\tau(B)$ and $B \in S$. On the other hand, $\tau$ acts freely on
$S$, a contradiction.

Suppose now that $\dim B = 1$. Then, according to \cite{Shin}, we
have $B \simeq \mathbb{P}^{1}$. Thus, since $\tau(B) = B$, there
are at least two $\tau$-fixed points on $B$. On the other hand, $B
\subset S$ and $\tau$ acts freely on $S$, a contradiction.
\end{proof}

Let us consider the anticanonical morphism
$\varphi_{\scriptscriptstyle|-K_{X}|}: X \longrightarrow Y$ and
assume that it is not an isomorphism. Then
$\varphi_{\scriptscriptstyle|-K_{X}|}$ is a double cover of the
threefold $Y: = \varphi_{\scriptscriptstyle|-K_{X}|}(X) \subset
\mathbb{P}^{n}$, where $n = -\frac{1}{2}K_{X}^{3} + 2$ (see
\cite{Iskovskikh-Prokhorov}). Let us denote by $D \subset Y$ the
ramification divisor of $\varphi_{\scriptscriptstyle|-K_{X}|}$.
\begin{lemma}
\label{theorem:degree-four} Suppose that $-K_{X}^{3} = 4$. Then
$X$ is the intersection of a quartic and a quadric in
$\mathbb{P}(1,1,1,1,1,2)$.
\end{lemma}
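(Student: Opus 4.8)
The plan is to pin down the image $Y$ and the structure of the double cover $\psi := \varphi_{\scriptscriptstyle|-K_{X}|}\colon X \longrightarrow Y$ completely, and then read off equations for $X$ in $\mathbb{P}(1,1,1,1,1,2)$. First I would determine $Y$. With $-K_{X}^{3} = 4$ we have $n = -\frac{1}{2}K_{X}^{3}+2 = 4$, so $Y \subset \mathbb{P}^{4}$. Since $\psi$ is a double cover onto its image, $2\deg Y = -K_{X}^{3} = 4$, whence $\deg Y = 2$. As $X$ is irreducible, $Y$ is an irreducible threefold in $\mathbb{P}^{4}$, hence a hypersurface of degree $\deg Y = 2$, i.e. a (possibly singular) quadric $Q \subset \mathbb{P}^{4}$.

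Next I would analyse the double cover $\psi\colon X \longrightarrow Q$, noting that $\psi^{*}\mathcal{O}_{Q}(1) = -K_{X}$. Writing its structure data as $\psi_{*}\mathcal{O}_{X} = \mathcal{O}_{Q}\oplus\mathcal{O}_{Q}(-m)$, the branch divisor $D$ lies in $|\mathcal{O}_{Q}(2m)|$ and the canonical class of the cover satisfies $K_{X} = \psi^{*}\left(K_{Q} + m\,H\right)$, where $H = \mathcal{O}_{Q}(1)$. By adjunction for the quadric, $K_{Q} = \mathcal{O}_{Q}(-3)$, while the requirement $K_{X} = \psi^{*}\mathcal{O}_{Q}(-1)$ (equivalent to $-K_{X} = \psi^{*}H$) forces $m - 3 = -1$, so $m = 2$. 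Hence $D \in |\mathcal{O}_{Q}(4)|$ is cut out on $Q$ by a quartic form $F_{4}$, and the cover is given by extracting a square root of $F_{4}$.

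Finally, introducing a coordinate $y$ of weight $2$ and a quadratic form $q$ defining $Q$, the cover is realised as $y^{2} = F_{4}$, so that $X = \{q = 0\}\cap\{y^{2} - F_{4} = 0\} \subset \mathbb{P}(1,1,1,1,1,2)$ is the intersection of a quadric and a quartic. To confirm, I would run adjunction in the weighted space: since $K_{\mathbb{P}(1,1,1,1,1,2)} = \mathcal{O}(-7)$, one gets $-K_{X} = \mathcal{O}_{X}(7 - 2 - 4) = \mathcal{O}_{X}(1)$, which is exactly the restriction of the weight‑one system, and $-K_{X}^{3} = \mathcal{O}(1)^{3}\cdot\mathcal{O}(2)\cdot\mathcal{O}(4) = 2\cdot 4\cdot\tfrac{1}{2} = 4$, matching the hypothesis; the projection forgetting $y$ recovers $\psi$ as the anticanonical map.

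The main obstacle I expect is controlling the case where $Q$ is a singular quadric (a cone), since then naive adjunction on $Q$ must be replaced by arguments with linear equivalence of Cartier divisors, and one must check that the double‑cover structure theorem still applies so that the branch locus is genuinely a quartic section, consistent with $X$ having only canonical Gorenstein singularities. A secondary point to verify is that $\psi$ is indeed a flat finite morphism of degree $2$ (so that $\psi_{*}\mathcal{O}_{X}$ splits as above), which is where the preceding description of $\varphi_{\scriptscriptstyle|-K_{X}|}$ as a double cover is used.
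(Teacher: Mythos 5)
Your proposal is, in substance, a reconstruction of the result the paper merely cites: the paper's entire proof of this lemma is the single line ``This follows from \cite[Remark 3.2]{CPS}'', and the double-cover analysis you outline is exactly the argument underlying that reference. For $Y$ a \emph{smooth} quadric your sketch is complete and correct: nondegeneracy and $\deg Y = \tfrac12(-K_X^3) = 2$ identify $Y$ with a quadric hypersurface $Q \subset \mathbb{P}^4$; the splitting $\psi_*\mathcal{O}_X = \mathcal{O}_Q \oplus \mathcal{O}_Q(-m)$ together with $K_Q = \mathcal{O}_Q(-3)$ and $-K_X = \psi^*\mathcal{O}_Q(1)$ forces $m = 2$; projective normality of $Q$ lifts the branch divisor to a quartic form $F_4$; and the weighted adjunction and degree checks for $X = \{q = 0\} \cap \{y^2 = F_4\} \subset \mathbb{P}(1,1,1,1,1,2)$ are right.

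The genuine gap is the case you explicitly set aside, where $Q$ is a singular quadric, and it cannot be waved off: it really occurs (take $q$ of rank $4$ and $F_4$ general; the two points of $X$ over the vertex are ordinary double points, so such $X$ are Fano with canonical Gorenstein singularities and their anticanonical image is the quadric cone). Over a singular base both of your key tools can honestly fail for finite double covers: $\psi$ need not be flat, and the anti-invariant part of $\psi_*\mathcal{O}_X$ is a priori only a rank-one reflexive sheaf. This is precisely the phenomenon behind the cone over the Veronese surface, where the cover $\mathbb{C}^3 \to \mathbb{C}^3/\{\pm 1\}$ is finite of degree $2$, non-flat and branched only at the vertex; accordingly that case gives a sextic in $\mathbb{P}(1,1,1,2,3)$ rather than a cover of your shape (\cite[Lemma 3.3]{CPS}), and has to be excluded separately in the present paper (Lemma~\ref{theorem:degree-eight}). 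To close your gap, note that for a quadric $Q \subset \mathbb{P}^4$ of any rank $\geqslant 3$ the class group $\mathrm{Cl}(Q)$ is torsion free ($\mathbb{Z}^2$ for rank $4$, $\mathbb{Z}$ otherwise). Write $\psi_*\mathcal{O}_X = \mathcal{O}_Q \oplus \mathcal{F}$ with $\mathcal{F}$ reflexive of rank one; the multiplication map identifies the reflexive hull $\mathcal{F}^{[2]}$ of $\mathcal{F}\otimes\mathcal{F}$ with $\mathcal{O}_Q(-D)$, where $D$ is the divisorial branch locus, and the Hurwitz formula for Weil divisors gives $\psi^*(D - 4H) \sim 0$, hence $D \sim 4H$ because $\psi_*\psi^* = 2\,\mathrm{id}$ and $\mathrm{Cl}(Q)$ has no torsion. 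Then $2[\mathcal{F}] = -4H$ has the \emph{unique} solution $[\mathcal{F}] = -2H$, which is Cartier; so $\mathcal{F} \simeq \mathcal{O}_Q(-2)$, $\psi_*\mathcal{O}_X$ is locally free, $\psi$ is flat after all, and your computation goes through verbatim for singular $Q$. (Alternatively, within this paper's hypotheses one can first rule out rank $\leqslant 3$: such a $Q$ is singular along a line, which forces $X$ to be singular along a curve, contradicting the existence of the smooth ample Cartier divisor $S \in |-K_X|$ --- the same argument by which the paper shows $d_2 \neq 0$ in Lemma~\ref{theorem:case-d-3-zero}.)
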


\begin{proof}
This follows from \cite[Remark 3.2]{CPS}.
\end{proof}

\begin{remark}
\label{remark:smooth-realization} From the proof of Theorem 1.1 in
\cite{Sano} it follows that there exists a smooth Fano threefold
$X$, which is the intersection of a quartic and a quadric in
$\mathbb{P}(1,1,1,1,1,2)$, with an action of regular involution
which acts freely on some smooth surface in $|-K_{X}|$.
\end{remark}

\begin{lemma}
\label{theorem:degree-eight} Threefold $Y$ is not the cone over
Veronese surface.
\end{lemma}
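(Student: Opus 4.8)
The plan is to assume that $Y$ is the cone over the Veronese surface $\mathcal{V} = v_{2}(\mathbb{P}^{2}) \subset \mathbb{P}^{5}$, with vertex $P$, and to derive a contradiction with the freeness of $\tau$ on $S$ by exhibiting a $\tau$-fixed point on $S$. Write $\varphi := \varphi_{\scriptscriptstyle|-K_{X}|}$. Since $\deg Y = 4$ and $\varphi$ has degree $2$, one has $-K_{X}^{3} = 2\deg Y = 8$ and $Y \subset \mathbb{P}^{6}$. First I would record the class-group data of $Y$: the vertex is a $\frac{1}{2}(1,1,1)$ point, so $Y$ is $\mathbb{Q}$-factorial with $\mathrm{Cl}(Y) = \mathbb{Z}\cdot A$ and $H := \mathcal{O}_{Y}(1) \sim 2A$, where $A$ generates the local class group $\mathbb{Z}/2$ at $P$. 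Blowing up $P$ to get a $\mathbb{P}^{1}$-bundle over $\mathbb{P}^{2}$ and applying the canonical class formula, one finds $-K_{Y} \sim 5A \sim_{\mathbb{Q}} \frac{5}{2}H$.

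Next I would use the standard double cover formulae. Since $-K_{X} = \varphi^{*}H$, the cover $\varphi$ is given by a Weil divisor $M$ with $K_{Y} + M \sim -H$ and branch $D \sim 2M$; hence $M \sim -K_{Y} - H \sim 5A - 2A = 3A$. The decisive point is that $3A \equiv A \not\equiv 0$ in the local class group $\mathbb{Z}/2$ at $P$, so $M$ is locally non-trivial at $P$. Consequently the cover is connected over a neighbourhood of $P$ and $\varphi^{-1}(P)$ is a single point $Q$; indeed, over $P$ the cover is the index-one cover $\mathbb{C}^{3} \longrightarrow \mathbb{C}^{3}/\mu_{2}$. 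As $P$ is the unique singular point of $Y$, it is preserved by the involution $\bar\tau$ that $\tau$ induces on $Y \subset \mathbb{P}^{6}$ (via its action on $H^{0}(X,-K_{X})$), whence $\tau(Q) = Q$. I would then observe that $S = \varphi^{-1}(Y_{H})$ for the hyperplane section $Y_{H} := \varphi(S) \in |H|$: from $\varphi_{*}\mathcal{O}_{X} = \mathcal{O}_{Y} \oplus \mathcal{O}_{Y}(-M)$ one gets $H^{0}(X,-K_{X}) = H^{0}(Y,H) \oplus H^{0}(Y,H-M)$, and $H - M \sim 2A - 3A = -A$ has no sections, so every anticanonical divisor on $X$ is pulled back from $|H|$.

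Now I split into two cases according to whether $P \in Y_{H}$. If $P \in Y_{H}$, then $Q = \varphi^{-1}(P) \in \varphi^{-1}(Y_{H}) = S$ is a $\tau$-fixed point of $S$, contradicting the hypothesis. If $P \notin Y_{H}$, then projection from $P$ identifies $Y_{H}$ with $\mathcal{V} \cong \mathbb{P}^{2}$, and $\varphi|_{S} \colon S \longrightarrow \mathbb{P}^{2}$ is a double cover branched along the sextic $R = D \cap Y_{H} \in |\mathcal{O}_{\mathbb{P}^{2}}(6)|$ (using $H|_{\mathcal{V}} = \mathcal{O}_{\mathbb{P}^{2}}(2)$ and $D \sim 3H$). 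The deck involution $\sigma$ of $\varphi$ generates $\mathrm{Gal}(X/Y) \cong \mathbb{Z}/2$, which $\tau$ normalizes (it preserves the fibres of $\varphi$), so $\tau$ and $\sigma$ commute; hence $\tau|_{S}$ descends to an involution $\bar\tau$ of $\mathbb{P}^{2}$ preserving $R$. Any involution of $\mathbb{P}^{2}$ fixes a line $\ell$, and by B\'ezout's theorem $\ell$ meets $R$; a point $p \in \ell \cap R$ is a branch point, so its unique preimage in $S$ is fixed by $\tau|_{S}$, again contradicting freeness. Both cases being impossible, $Y$ is not the cone over the Veronese surface.

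The main obstacle I anticipate is the local analysis at the vertex, namely verifying that $M$ is locally non-trivial and therefore that $\varphi^{-1}(P)$ is a single point fixed by $\tau$. This rests on the explicit identification of $\mathrm{Cl}(Y) = \mathbb{Z}\cdot A$ together with the computation $-K_{Y} \sim_{\mathbb{Q}} \frac{5}{2}H$, and it is exactly what makes the dichotomy $P \in Y_{H}$ versus $P \notin Y_{H}$ exhaustive. Once it is established, each branch terminates quickly in a $\tau$-fixed point of $S$.
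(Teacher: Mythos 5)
Your proof is correct, but it takes a genuinely different route from the paper's. The paper invokes \cite[Lemma 3.3]{CPS} to realize $X$ as a sextic hypersurface in $\mathbb{P}(1,1,1,2,3)$, uses $\mathrm{Pic}(X)\simeq\mathbb{Z}$ (\cite{dolgachev} plus \cite[Proposition 1.2.1]{Iskovskikh-Prokhorov}) to lift $\tau$ to a diagonal involution of this weighted projective space, shows that after normalization the action must be $[x_{0}:x_{1}:x_{2}:x_{3}:x_{4}]\mapsto[x_{0}:x_{1}:-x_{2}:-x_{3}:-x_{4}]$ (any other linearization would force a curve of $\tau$-fixed points on $X$), and then, writing the explicit equations of $X$ and of $S$, checks that $S$ necessarily meets the fixed locus. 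You never leave the cone $Y$: the computation $\mathrm{Cl}(Y)=\mathbb{Z}\cdot A$, $-K_{Y}\sim_{\mathbb{Q}}\frac{5}{2}H$, $M\sim 3A$ shows that the Weil divisor defining the double cover is locally non-trivial at the vertex, so the fibre over the vertex is a single, hence $\tau$-fixed, point; the contradiction is then reached either because that point lies on $S$, or, when $Y_{H}=\varphi(S)$ misses the vertex, because a pointwise-fixed line of the involution induced on $Y_{H}\cong\mathbb{P}^{2}$ meets the branch sextic by B\'ezout, giving a $\tau$-fixed ramification point on $S$. Both proofs end by exhibiting a $\tau$-fixed point on $S$; yours trades the weighted-hypersurface model and the Picard-group lifting for an intrinsic local class-group analysis at the vertex, so it is more self-contained, while the paper's coordinate computation is shorter once the CPS model is granted. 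Two minor points of care in your write-up: the parenthetical claim that the cover over the vertex is exactly the index-one cover $\mathbb{C}^{3}\to\mathbb{C}^{3}/\mu_{2}$ tacitly assumes the vertex is not on the branch divisor, but all your argument needs is that the fibre over the vertex does not split, which follows from the local non-triviality of $M$ alone; and the map induced by $\tau|_{S}$ on $Y_{H}\cong\mathbb{P}^{2}$ could a priori be the identity (if $\tau|_{S}$ coincided with the deck involution), in which case your concluding step still applies verbatim, since then every point of the branch sextic is fixed downstairs.
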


\begin{proof}
Suppose that $Y$ is the cone over Veronese surface. Then the
threefold $X$ is isomorphic to a hypersurface of degree $6$ in the
weighted projective space $\mathbb{P}: = \mathbb{P}(1,1,1,2,3)$
(see \cite[Lemma 3.3]{CPS}). Since $\mathrm{Pic}(X) \simeq
\mathrm{Pic}(\mathbb{P}) = \mathbb{Z}$ (see \cite{dolgachev}),
\cite[Proposition 1.2.1]{Iskovskikh-Prokhorov} implies that for
every $m \in \mathbb{N}$ automorphism $\tau$ naturally lifts to
involution acting on the linear system
$|\mathcal{O}_{\mathbb{P}}(m)|$. This determines the lifting of
$\tau$ to involution on $\mathbb{P}$ which we again denote by
$\tau$.

Choose homogeneous coordinates $x_{0}$, $x_{1}$, $x_{2}$, $x_{3}$,
$x_{4}$ on $\mathbb{P}$, where $\deg x_{0} = \deg x_{1} = \deg
x_{2} = 1$, $\deg x_{3} = 2$, $\deg x_{4} = 3$, such that $x_{i}$
is an eigen function of $\tau$ with an eigen value $\pm 1$. After
multiplication by $-1$ and renumbering one can set $x_{0}$ and
$x_{1}$ to be $\tau$-invariant. Then the action of $\tau$ on
$\mathbb{P}$ is
\begin{equation}
\label{involution-map} [x_{0}:x_{1}:x_{2}:x_{3}:x_{4}] \mapsto
[x_{0}:x_{1}:-x_{2}:-x_{3}:-x_{4}].
\end{equation}
Indeed, in any other expression the locus of $\tau$-fixed points
on $\mathbb{P}$ contains a surface. But $\mathrm{Pic}(\mathbb{P})
= \mathbb{Z}$ and $X$ is a Cartier divisor. Hence the locus of
$\tau$-fixed points on $X$ must contain a curve which is
impossible because $\tau$ acts freely on $S \in |-K_{X}|$.

Further, according to \eqref{involution-map}, the locus of
$\tau$-fixed points on $\mathbb{P}$ consists of the curves $C_{1}
= (x_{2} = x_{3} = x_{4} = 0)$, $C_{2} = (x_{0} = x_{1} = x_{3} =
0)$ and the point $O = [0:0:0:1:0]$. Since $\tau$ acts freely on
$S \in |-K_{X}|$, we have $C_{1}$, $C_{2} \not\subset X$. This and
\eqref{involution-map} imply, since $\tau(X) = X$ and $X \in
|\mathcal{O}_{\mathbb{P}}(6)|$, that the equation of $X$ is
\begin{eqnarray}
F_{6}(x_{0}:x_{1}) + \alpha_{1}x_{2}^{6} +
x_{2}^{4}F_{2}(x_{0}:x_{1}) + \alpha_{2}x_{2}^{3}x_{4} +
x_{2}^{3}x_{3}F_{1}(x_{0}:x_{1}) +\label{element-1}\\ +
x_{2}^{2}F_{4}(x_{0}:x_{1})+x_{2}x_{3}F_{3}(x_{0}:x_{1})+
x_{2}x_{4}G_{2}(x_{0}:x_{1}) + \nonumber
\\ + x_{3}^{2}H_{2}(x_{0}:x_{1})+ x_{3}x_{4}G_{1}(x_{0}:x_{1})+
\alpha_{3}x_{4}^{2} = 0,\nonumber
\end{eqnarray}
where $\alpha_{i} \in \mathbb{C}$, $F_{i}$, $G_{i}$ are
homogeneous polynomials in $x_{0}$, $x_{1}$ of degree $i$.

On the other hand, for the $\tau$-invariant surface $S \in
|-K_{X}|$ we have $S \cap \left(C_{1} \cup C_{2} \cup \{O\}\right)
= \emptyset$ by assumption. This and \eqref{involution-map} imply,
since $\mathrm{Pic}(X) \simeq \mathrm{Pic}(\mathbb{P}) =
\mathbb{Z}$ and $-K_{X} \sim \mathcal{O}_{X}(2)$, that the
equation of $S$ on $X$ is one of the following:
\begin{eqnarray}
\alpha x_{3} + x_{2}H_{1}(x_{0}:x_{1}) = 0 \label{K-1}
\end{eqnarray}

or
\begin{eqnarray}
\beta x_{2}^{2} + H_{2}(x_{0}:x_{1}) = 0 \label{K-2},
\end{eqnarray}
where $\alpha$, $\beta \in \mathbb{C}$, $H_{i}$ are homogeneous
polynomials in $x_{0}$, $x_{1}$ of degree $i$. But in case
\eqref{K-1} one gets $S \cap C_{1} \ne \emptyset$ and in case
\eqref{K-2} we have $S \ni O$. Thus, in both cases $S$ contains a
$\tau$-fixed point. The obtained contradiction proves
Lemma~\ref{theorem:degree-eight}.
\end{proof}

\begin{remark}
\label{remark:assumption} From
Lemmas~\ref{theorem:degree-is-divisible-by-four}--\ref{theorem:degree-four},
Remark~\ref{remark:smooth-realization} and
Lemma~\ref{theorem:degree-eight} we deduce that to prove
Theorem~\ref{theorem:main} it remains to consider the case when
$-K_{X}^{3} \geqslant 8$ and the threefold $Y$ is not the cone
over Veronese surface. In what follows we assume these conditions
to be satisfied for $X$.
\end{remark}

Since the degree of $Y \subset \mathbb{P}^{n}$ equals $n-2$, by
Remark~\ref{remark:assumption} and by Enriques Theorem (see
\cite[Theorem 3.11]{iskovskikh-anti-canonical-models}) there is a
birational morphism $\varphi_{\scriptscriptstyle|M|}:
\mathbb{F}(d_{1}, d_{2}, d_{3}) \longrightarrow Y$. Here
$\mathbb{F}(d_{1}, d_{2}, d_{3}): =
\mathrm{Proj}\left(\bigoplus_{i=1}^{3}
\mathcal{O}_{\mathbb{P}^{1}}(d_{i})\right)$ is a rational scroll,
$M$ is the class of tautological divisor on $\mathbb{F}(d_{1},
d_{2}, d_{3})$, $d_{1} \geqslant d_{2} \geqslant d_{3} \geqslant
0$. Let us also denote by $L$ the class of a fibre of the natural
projection $\mathbb{F}(d_{1},d_{2},d_{3}) \longrightarrow
\mathbb{P}^{1}$.
\begin{lemma}
\label{theorem:formula-for-degree} The equality $-K_{X}^{3} =
2(d_{1}+d_{2}+d_{3})$ takes place.
\end{lemma}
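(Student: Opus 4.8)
The plan is to compute $-K_X^3$ by pushing everything onto the scroll $\mathbb{F} := \mathbb{F}(d_1,d_2,d_3)$, where intersection theory is completely explicit. We have the anticanonical double cover $\varphi_{\scriptscriptstyle|-K_X|}\colon X \longrightarrow Y$ and the birational morphism $\varphi_{\scriptscriptstyle|M|}\colon \mathbb{F} \longrightarrow Y$ from Enriques' theorem. So it is natural to pass to the normalized fibre product and realize $X$ (or a resolution of it) as a double cover of $\mathbb{F}$ branched along a divisor in the linear system $|4M - 2(2 - \sum_i d_i)L|$ announced in case \ref{B-case}) of Theorem~\ref{theorem:main}; this is consistent with the general shape of the double cover $V \to \mathbb{F}$ that appears there. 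The first step, then, is to identify the pullback of $-K_X$ on $\mathbb{F}$ with $M$ (up to the contraction performed by $\varphi_{\scriptscriptstyle|M|}$, which does not affect top intersection numbers), so that $-K_X^3 = M^3$ computed on $\mathbb{F}$.

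First I would recall the standard intersection numbers on the scroll. On $\mathbb{F}(d_1,d_2,d_3)$ one has $L^2 = 0$ and the tautological class $M$ satisfies the Grothendieck relation
\begin{equation}
\label{eq:grothendieck}
M^3 = (d_1 + d_2 + d_3)\,M^2 \cdot L, \qquad M^2 \cdot L = 1, \qquad L^2 = 0.
\end{equation}
Here $M^2 \cdot L = 1$ because the restriction of $M$ to a fibre $L \cong \mathbb{P}^2$ is the hyperplane class, and the coefficient in the first relation is $c_1$ of the bundle $\bigoplus_i \mathcal{O}_{\mathbb{P}^1}(d_i)$, namely $\sum_i d_i$. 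Combining these gives $M^3 = d_1 + d_2 + d_3$ on $\mathbb{F}$.

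Next I would relate $M^3$ to $-K_X^3$. Since $\deg Y = n - 2 = -\tfrac{1}{2}K_X^3$ and $\varphi_{\scriptscriptstyle|-K_X|}$ is a double cover, the projection formula gives $-K_X^3 = \deg(\varphi_{\scriptscriptstyle|-K_X|}) \cdot \deg Y = 2\deg Y$; on the other side $\varphi_{\scriptscriptstyle|M|}$ is birational so $\deg Y = M^3 = d_1 + d_2 + d_3$. Hence $-K_X^3 = 2(d_1 + d_2 + d_3)$, as claimed. The one point requiring care — and I expect it to be the main obstacle — is checking that the birational morphism $\varphi_{\scriptscriptstyle|M|}$, which contracts the negative section when some $d_i$ vanish or coincide, genuinely preserves the top self-intersection, i.e.\ that $M^3$ on $\mathbb{F}$ equals $\deg Y$ and no correction term enters from the exceptional locus. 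Since $M = \varphi_{\scriptscriptstyle|M|}^{*}(\mathcal{O}_Y(1))$ (the tautological system realizes the map to $Y \subset \mathbb{P}^n$) and $\varphi_{\scriptscriptstyle|M|}$ is birational, the projection formula $M^3 = \varphi_{\scriptscriptstyle|M|}^{*}(\mathcal{O}_Y(1))^3 = \mathcal{O}_Y(1)^3 = \deg Y$ handles this cleanly, so the exceptional locus contributes nothing to the computation and the formula $-K_X^3 = 2(d_1+d_2+d_3)$ follows.
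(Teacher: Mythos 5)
Your proposal is correct and is essentially the paper's own proof: the paper likewise writes $-\frac{1}{2}K_{X}^{3} = \deg(Y) = M^{3}$ (the double cover relation for $\varphi_{\scriptscriptstyle|-K_{X}|}$ plus birationality of $\varphi_{\scriptscriptstyle|M|}$) and quotes $M^{3} = d_{1}+d_{2}+d_{3}$ from Reid, which you rederive via the Grothendieck relation. The only blemish is the sentence in your opening paragraph asserting $-K_{X}^{3} = M^{3}$; that is a slip (the degree-two cover gives $-K_{X}^{3} = 2M^{3}$), but your actual computation in the final paragraph carries the factor of $2$ correctly, so nothing is affected.
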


\begin{proof}
We have $-\frac{1}{2}K_{X}^{3} = \deg(Y) = M^{3}$. On the other
hand, $M^{3} = d_{1}+d_{2}+d_{3}$ by \cite[A.4]{Reid-surfaces},
which implies equality we need.
\end{proof}

\begin{lemma}
\label{theorem:simple-case-d-3-non-zero} If $d_{3} \ne 0$, then
$\varphi_{\scriptscriptstyle|M|}$ is an isomorphism and $D \in |4M
- 2(\sum_{i=1}^{3}d_{i}-2)L|$. Moreover, $(d_{1},d_{2},d_{3}) =
(2,1,1)$ or $(2,2,2)$.
\end{lemma}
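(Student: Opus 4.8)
\emph{Plan.} I would prove the three assertions in turn, the last (the numerical list) being the delicate one.

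\emph{The isomorphism.} Since $d_{3}\neq 0$ we have $d_{1}\geqslant d_{2}\geqslant d_{3}\geqslant 1$, so the bundle $\bigoplus_{i=1}^{3}\mathcal{O}_{\mathbb{P}^{1}}(d_{i})$ is very ample and hence the tautological class $M$ on $\mathbb{F}(d_{1},d_{2},d_{3})$ is very ample. Therefore $\varphi_{\scriptscriptstyle|M|}$ is a closed embedding; being at the same time birational onto $Y$ (by the Enriques Theorem invoked just before the lemma), it is an isomorphism $\mathbb{F}(d_{1},d_{2},d_{3})\simeq Y$. From now on I identify $Y$ with the scroll and $\mathcal{O}_{Y}(1)$ with $M$.

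\emph{The class of $D$.} I would apply the canonical bundle formula for the double cover $\varphi_{\scriptscriptstyle|-K_{X}|}\colon X\to Y$, namely $K_{X}\sim\varphi_{\scriptscriptstyle|-K_{X}|}^{*}(K_{Y}+\tfrac{1}{2}D)$. Since $-K_{X}\sim\varphi_{\scriptscriptstyle|-K_{X}|}^{*}M$ and pullback is injective on divisor classes, this yields $\tfrac{1}{2}D\sim -K_{Y}-M$. It remains to insert the relative canonical class of the scroll, $K_{Y}=K_{\mathbb{F}}\sim -3M+(\sum_{i=1}^{3}d_{i}-2)L$, which is the rank-$3$, $\mathbb{P}^{1}$-base case of $K_{\mathbb{P}(\mathcal{E})}=-3M+\pi^{*}(K_{\mathbb{P}^{1}}+\det\mathcal{E})$. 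Substituting gives $\tfrac{1}{2}D\sim 2M-(\sum_{i=1}^{3}d_{i}-2)L$, that is, $D\in|4M-2(\sum_{i=1}^{3}d_{i}-2)L|$.

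\emph{The list.} Set $s:=\sum_{i=1}^{3}d_{i}$; by Lemma~\ref{theorem:formula-for-degree} we have $-K_{X}^{3}=2s$, so $s$ is even by Lemma~\ref{theorem:degree-is-divisible-by-four}. Consider the minimal section $C\simeq\mathbb{P}^{1}$ of the scroll, i.e.\ the curve with $M\cdot C=d_{3}$ and $L\cdot C=1$. Then $D\cdot C=4d_{3}-2(s-2)=4d_{3}-2s+4$, so $D\cdot C\geqslant 0$ is equivalent to $d_{1}+d_{2}-d_{3}\leqslant 2$. Granting this inequality, together with $d_{1}\geqslant d_{2}\geqslant d_{3}\geqslant 1$ it leaves only $(1,1,1)$, $(2,1,1)$, $(2,2,2)$, and the first is excluded since its $s=3$ is odd. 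Hence $(d_{1},d_{2},d_{3})=(2,1,1)$ or $(2,2,2)$, as claimed.

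\emph{The main obstacle.} The hard part is exactly the non-negativity $D\cdot C\geqslant 0$. If it failed, then $C$ would lie in $\mathrm{Bs}\,|D|$ and hence in the branch divisor $D$ itself with some multiplicity $\mathrm{mult}_{C}D\geqslant 1$; I would argue that this forces the double cover $X$ to acquire singularities worse than canonical along $\varphi_{\scriptscriptstyle|-K_{X}|}^{-1}(C)$ (equivalently, that the pair $(Y,\tfrac{1}{2}D)$ ceases to be canonical), or, restricting to a general $S\in|-K_{X}|$, that the induced double cover $S\to\bar{S}$ of the hyperplane section $\bar{S}\in|M|$ can no longer be a smooth $\mathrm{K3}$ on which $\tau$ acts freely. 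Either alternative contradicts the standing hypotheses, giving $D\cdot C\geqslant 0$. Making this precise---in particular controlling $\mathrm{mult}_{C}D$ and the resulting transverse singularity type along $C$---is where the real work lies; everything else is the bookkeeping above.
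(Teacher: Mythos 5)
Your first two steps are sound and essentially identical to the paper's: very ampleness of $M$ when all $d_{i}\geqslant 1$ (the paper cites \cite[Theorem 2.5]{Reid-surfaces}) gives the isomorphism, and the Hurwitz formula together with $K_{\mathbb{F}}\sim -3M+(\sum_{i}d_{i}-2)L$ gives the class of $D$. The gap is exactly where you yourself locate it, and it is genuine: the inequality $D\cdot C\geqslant 0$ is never proved, and the way you propose to close it would not work as stated. If $D\cdot C<0$ you only get $C\subset\mathrm{Supp}(D)$, and containment with multiplicity one is harmless --- the double cover is then \emph{smooth} at a general point of $\varphi_{\scriptscriptstyle|-K_{X}|}^{-1}(C)$, so no contradiction arises. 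Worse, your first proposed target (``the pair $(Y,\tfrac{1}{2}D)$ ceases to be canonical'') is the wrong invariant: a threefold double cover can perfectly well have \emph{canonical non-isolated} singularities (compound Du Val along a curve, e.g.\ $u^{2}=x^{2}+\cdots$), so even establishing $\mathrm{mult}_{C}D\geqslant 2$ would not by itself contradict canonicity of $X$.

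The contradiction that actually works is with \emph{isolated} singularities: since $S\in|-K_{X}|$ is a smooth ample Cartier divisor, $X$ is smooth along $S$, hence $\mathrm{Sing}(X)$ is finite. Using the monomial description \eqref{linear-system-on-scroll-0} of $|aM+bL|$ on the scroll, one checks that $D\cdot C<0$ (together with the parity $\sum d_{i}$ even, which disposes of $(3,1,1)$) forces \emph{every} member of $|4M-2(\sum_{i}d_{i}-2)L|$ to vanish to order at least $2$ along $C$, or the system to be empty; consequently the branch divisor itself has $\mathrm{mult}_{C}D\geqslant 2$, the double cover is singular along all of $\varphi_{\scriptscriptstyle|-K_{X}|}^{-1}(C)$, and this contradicts finiteness of $\mathrm{Sing}(X)$. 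This computation is precisely what Table~1 in the proof of Theorem~1.5 of \cite{CPS} encodes; the paper's own proof simply combines that table with Lemmas~\ref{theorem:degree-is-divisible-by-four} and \ref{theorem:formula-for-degree} and the observation that smoothness of $S$ forces isolated singularities. Your route, once the multiplicity estimate is supplied, would make the lemma self-contained rather than a citation, and your numerical bookkeeping from $D\cdot C\geqslant 0$ is correct; but as written the decisive step is missing and the suggested way of supplying it aims at the wrong contradiction.
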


\begin{proof}
The fact that $\varphi_{\scriptscriptstyle|M|}$ is an isomorphism
for $d_{3} \ne 0$ follows from \cite[Theorem 2.5]{Reid-surfaces}.
Thus, we have $-K_{X} \sim
\varphi_{\scriptscriptstyle|-K_{X}|}^{*}(M)$ and $K_{Y} \sim -3M +
(d_{1}+d_{2}+d_{3}-2)L$ (see \cite[A. 13]{Reid-surfaces}). This
together with the Hurwitz formula gives $D \in |4M -
2(\sum_{i=1}^{3}d_{i}-2)L|$. Finally, since $S \in |-K_{X}|$ is a
smooth surface, the threefold $X$ has isolated singularities.
According to Table 1 in the proof of Theorem 1.5 in \cite{CPS} and
Lemmas~\ref{theorem:degree-is-divisible-by-four},
\ref{theorem:formula-for-degree}, this is possible only for
$(d_{1}, d_{2}, d_{3}) = (2,1,1)$ and $(2,2,2)$.
\end{proof}

\begin{remark}
\label{remark:assumption-1} Let $X$ be a double cover of
$\mathbb{F}(d_1,d_2,d_3)$, where $(d_{1}, d_{2}, d_{3}) = (2,1,1)$
or $(2,2,2)$, with ramification at general divisor in
$\mathcal{D}: = |4M - 2(\sum_{i=1}^{3}d_{i}-2)L|$. It is easy to
write down the basis of the linear system $\mathcal{D}$ (see
\cite[2.4]{Reid-surfaces} or \eqref{linear-system-on-scroll-0}
below) and obtain that $\mathcal{D}$ does not have base points.
This together with the Hurwitz formula implies that $X$ is a
smooth Fano threefold of degree $8$ or $12$. Moreover, according
to \cite[Remark 1.8]{CPS}, $X$ belongs to the list from Theorem
1.1 in \cite{Sano}. Hence there is an action of regular involution
$\tau$ on $X$ such that the factor $W: = X/\tau$ is a
Fano--Enriques threefold with isolated singularities. Since the
genus of $W$ equals $3$ or $4$, it follows from \cite[Corollary
3.7]{Prokhorov-Enriques} that $\tau$ acts freely on some smooth
$\mathrm{K3}$ surface in $|-K_{X}|$ (see arguments in
Introduction). Moreover, by construction the linear system
$|-K_{X}|$ gives a morphism of degree $2$.
\end{remark}
It follows from Lemma~\ref{theorem:simple-case-d-3-non-zero} and
Remark~\ref{remark:assumption-1} that to prove
Theorem~\ref{theorem:main} it remains to consider the case when
$d_{3} = 0$. In what follows we assume this condition to be
satisfied for $X$. Set $\mathbb{F}: = \mathbb{F}(d_{1},d_{2},0)$.
\begin{lemma}
\label{theorem:case-d-3-zero} In the above notation, morphism
$\varphi_{\scriptscriptstyle|M|}: \mathbb{F} \longrightarrow Y$ is
a small birational contraction and
$\varphi_{\scriptscriptstyle|M|}^{*}(D) \in |4M - 2(d_{1} + d_{2}
- 2)L|$. The exceptional locus of
$\varphi_{\scriptscriptstyle|M|}$ is an irreducible rational curve
and the threefold $Y$ is a cone with the unique singularity at the
vertex.
\end{lemma}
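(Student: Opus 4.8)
The plan is to analyze the scroll $\mathbb{F} = \mathbb{F}(d_1,d_2,0)$ when $d_3 = 0$ and understand the geometry of $\varphi_{\scriptscriptstyle|M|}$. First I would recall that when $d_3 = 0$, the summand $\mathcal{O}_{\mathbb{P}^1}$ in $\bigoplus \mathcal{O}_{\mathbb{P}^1}(d_i)$ produces a section of the scroll along which the tautological divisor $M$ has degree zero; concretely, the linear system $|M|$ contracts the curve $C$ corresponding to this trivial summand. Since $d_1 \geqslant d_2 \geqslant 1$ (we are in the range $-K_X^3 \geqslant 8$, so not all $d_i$ vanish), the tautological map $\varphi_{\scriptscriptstyle|M|}$ is birational onto its image $Y$ but contracts this section to a point, giving the cone structure. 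I would verify that $C$ is an irreducible rational curve, being a section of the $\mathbb{P}^1$-bundle over $\mathbb{P}^1$, hence $\simeq \mathbb{P}^1$.

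Next I would establish that the contraction is \emph{small}, meaning its exceptional locus $C$ has codimension at least $2$. Since $\dim \mathbb{F} = 3$ and $C$ is a curve, this is immediate once I show that $C$ is exactly the locus where $\varphi_{\scriptscriptstyle|M|}$ fails to be an isomorphism. The key computation is that $M \cdot C = 0$ while $M$ is ample away from $C$; I would use the standard description of the Picard group and intersection theory on scrolls (following \cite{Reid-surfaces}, A.4 and 2.4) to confirm that $M|_C \equiv 0$ and that $M$ is relatively ample elsewhere, so $\varphi_{\scriptscriptstyle|M|}$ is an isomorphism on $\mathbb{F} \setminus C$ and sends $C$ to the single vertex of the cone.

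For the pullback formula, I would argue that since $\varphi_{\scriptscriptstyle|M|}$ is a small birational contraction with $M = \varphi_{\scriptscriptstyle|M|}^{*}(\mathcal{O}_Y(1))$ (the tautological bundle induces the hyperplane class on $Y$), and since $D$ is the ramification divisor on $Y$ lying in the appropriate anticanonical-related class, the pullback $\varphi_{\scriptscriptstyle|M|}^{*}(D)$ is computed exactly as in the isomorphism case of Lemma~\ref{theorem:simple-case-d-3-non-zero}, namely by the Hurwitz formula applied to $\varphi_{\scriptscriptstyle|-K_X|}$ together with $K_Y \sim -3M + (d_1+d_2+d_3-2)L$. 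Setting $d_3 = 0$ yields $\varphi_{\scriptscriptstyle|M|}^{*}(D) \in |4M - 2(d_1+d_2-2)L|$. The small contraction ensures no discrepancy correction enters, so the linear equivalence on $\mathbb{F}$ agrees with the one on $Y$ pulled back.

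The main obstacle I anticipate is proving that $Y$ is a cone with a \emph{unique} singularity at the vertex, rather than merely singular along the image of $C$. This requires showing that away from the vertex $Y$ inherits the smoothness of $\mathbb{F}$ (via the isomorphism there), and that the image point is genuinely a cone vertex — that the cone structure comes from the grading where the trivial summand $\mathcal{O}_{\mathbb{P}^1}$ forces all other coordinates to scale homogeneously through the contracted section. I would make this precise by exhibiting the affine cone coordinates: the ample embedding of $Y$ by $|\mathcal{O}_Y(1)|$ realizes $Y$ as a projective cone over the scroll surface obtained from the positive summands, with apex the image of $C$. Confirming that no other singular points arise reduces to checking that the only $M$-trivial curve is $C$ itself, which follows from the strict positivity $d_1, d_2 \geqslant 1$ of the remaining degrees on all other fibral and sectional curves.
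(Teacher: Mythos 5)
Your overall route---identifying the exceptional locus with the section $C$ given by the trivial summand $\mathcal{O}_{\mathbb{P}^{1}}$, checking $M\cdot C=0$ while $M$ is positive elsewhere, deducing the cone structure, and obtaining the class of $\varphi_{\scriptscriptstyle|M|}^{*}(D)$ by the Hurwitz formula exactly as in Lemma~\ref{theorem:simple-case-d-3-non-zero} (with no discrepancy correction because the contraction is small)---is essentially the paper's route, which delegates the structural facts to the proof of Theorem 2.5 in \cite{Reid-surfaces}.

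However, there is a genuine gap: your justification of $d_{1}\geqslant d_{2}\geqslant 1$ is wrong, and this inequality is the crux of the lemma. You argue that $-K_{X}^{3}\geqslant 8$ means ``not all $d_{i}$ vanish,'' but that only yields $d_{1}\geqslant 1$; the numerical constraints available at this point ($-K_{X}^{3}=2(d_{1}+d_{2}+d_{3})$ divisible by $4$ and $\geqslant 8$, $d_{3}=0$) are perfectly consistent with $d_{2}=0$, e.g.\ $(d_{1},d_{2},d_{3})=(4,0,0)$ or $(6,0,0)$. And in that case every conclusion of the lemma fails: for $\mathbb{F}(d_{1},0,0)$ the locus on which $M$ is not positive is the entire subscroll $\mathrm{Proj}\left(\mathcal{O}_{\mathbb{P}^{1}}\oplus\mathcal{O}_{\mathbb{P}^{1}}\right)\simeq \mathbb{P}^{1}\times\mathbb{P}^{1}$, which $\varphi_{\scriptscriptstyle|M|}$ contracts onto a line, so the contraction is divisorial rather than small, and $Y$ is a cone with a whole curve of singularities rather than a unique vertex. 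This case cannot be excluded numerically; the paper excludes it geometrically, by noting that a curve of singularities on $Y$ would force the singularities of the double cover $X$ to be non-isolated, contradicting the hypothesis that $S\in|-K_{X}|$ is a smooth surface. Your final step (``no other singular points arise \ldots follows from the strict positivity $d_{1},d_{2}\geqslant 1$'') leans on exactly the unestablished inequality $d_{2}\geqslant 1$, so the gap propagates both to smallness of the contraction and to uniqueness of the vertex.
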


\begin{proof}
We have $d_{2} \ne 0$. Indeed, if $d_{2} = 0$, then $Y$ is a cone
with a curve of singularities (see the proof of Theorem 2.5 in
\cite{Reid-surfaces}). The latter implies that the singularities
of $X$ are non-isolated, which is impossible because $S \in
|-K_{X}|$ is a smooth surface. Further, as in the proof of
Lemma~\ref{theorem:simple-case-d-3-non-zero}, we obtain that
$\varphi_{\scriptscriptstyle|M|}^{*}(D) \in |4M - 2(d_{1} + d_{2}
- 2)L|$. Finally, the fact that the exceptional locus of
$\varphi_{\scriptscriptstyle|M|}$ is an irreducible rational curve
and $Y$ is a cone with the unique singularity at the vertex
follows from the proof of Theorem 2.5 in \cite{Reid-surfaces}.
\end{proof}

\begin{lemma}
\label{theorem:double-cover} In the above notation, let $V$ be the
double cover of $\mathbb{F}$ with ramification divisor
$\varphi_{\scriptscriptstyle|M|}^{*}(D)$. Then $X$ is an image of
$V$ under birational morphism, given by the multiple anticanonical
linear system $|-rK_{V}|$, $r \gg 0$.
\end{lemma}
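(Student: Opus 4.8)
The plan is to realize $X$ as the anticanonical model of the explicit double cover $V$ by comparing everything over the cone $Y$. First I would let $\rho\colon V\longrightarrow\mathbb{F}$ be the double cover branched along $B:=\varphi_{\scriptscriptstyle|M|}^{*}(D)$, which by Lemma~\ref{theorem:case-d-3-zero} lies in $|4M-2(d_{1}+d_{2}-2)L|=|2\mathcal{L}'|$ with $\mathcal{L}'\in|2M-(d_{1}+d_{2}-2)L|$. Since $\mathbb{F}$ is smooth and $\mathcal{L}'$ is Cartier, $V$ is a normal Gorenstein threefold and the standard covering formula gives $K_{V}=\rho^{*}(K_{\mathbb{F}}+\mathcal{L}')$. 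Using $K_{\mathbb{F}}=-3M+(d_{1}+d_{2}-2)L$ one computes $K_{\mathbb{F}}+\mathcal{L}'=-M$, so that $-K_{V}=\rho^{*}M$. Because $M$ is the tautological class defining the contraction $\varphi_{\scriptscriptstyle|M|}$, it is semiample, nef and big with $M=\varphi_{\scriptscriptstyle|M|}^{*}A$ for the hyperplane class $A=\mathcal{O}_{Y}(1)$; hence $-K_{V}=\psi^{*}A$ with $\psi:=\varphi_{\scriptscriptstyle|M|}\circ\rho\colon V\longrightarrow Y$, and $-K_{V}$ is itself semiample, nef and big (its cube is $2M^{3}=2(d_{1}+d_{2})>0$).

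Next I would use semiampleness to produce the anticanonical morphism. For $r\gg0$ the system $|-rK_{V}|=|\psi^{*}(rA)|$ is base-point-free and defines a birational morphism $\Phi\colon V\longrightarrow\bar V:=\operatorname{Proj}\bigoplus_{m\ge0}H^{0}(V,-mK_{V})$ onto the anticanonical model. By construction this $\bar V$ is the target of the Stein factorization $V\xrightarrow{\Phi}\bar V\xrightarrow{g}Y$ of $\psi$, so $g$ is finite and $\bar V$ is normal with $\Phi_{*}\mathcal{O}_{V}=\mathcal{O}_{\bar V}$. A curve $\gamma\subset V$ is $\Phi$-contracted precisely when $(-K_{V}\cdot\gamma)=(A\cdot\psi_{*}\gamma)=0$, i.e.\ when $\rho(\gamma)$ lies in the curve $C$ contracted by the small contraction $\varphi_{\scriptscriptstyle|M|}$; thus $\Phi$ contracts only $\rho^{-1}(C)$ and is an isomorphism elsewhere, exactly the behaviour expected for the passage from $V$ to a cover of the cone $Y$.

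Finally I would identify $\bar V$ with $X$ via the uniqueness of the double cover with prescribed branch. Since $\varphi_{\scriptscriptstyle|M|}$ is birational we have $\mathbb{C}(\mathbb{F})=\mathbb{C}(Y)$, and the defining equations $w^{2}=(\text{local equation of }B)$ and $w^{2}=(\text{local equation of }D)$ correspond under $B=\varphi_{\scriptscriptstyle|M|}^{*}(D)$; hence $\mathbb{C}(V)=\mathbb{C}(X)$ as degree-two extensions of $\mathbb{C}(Y)$, and $g\colon\bar V\longrightarrow Y$ is a finite normal cover of degree $2$ whose branch divisor is $D$ (as $\Phi$ is an isomorphism over the general point of $D$). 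Since $\varphi_{\scriptscriptstyle|-K_{X}|}\colon X\longrightarrow Y$ is the double cover branched along $D$ and $X$ is normal, both $\bar V$ and $X$ coincide with the integral closure of $\mathcal{O}_{Y}$ in $\mathbb{C}(V)=\mathbb{C}(X)$, so $\bar V\cong X$ over $Y$. This exhibits $X$ as the image of $V$ under $|-rK_{V}|$, $r\gg0$.

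The step I expect to be the main obstacle is controlling the morphism defined by $|-rK_{V}|$ well enough to know that its image is a \emph{finite} degree-two cover of $Y$ and nothing larger. This is where semiampleness of $M$ is essential, so that $-K_{V}$ is semiample and the anticanonical model genuinely coincides with the Stein target of $\psi$, and where the smallness of $\varphi_{\scriptscriptstyle|M|}$ from Lemma~\ref{theorem:case-d-3-zero} guarantees that only the curves over the vertex of $Y$ are contracted. Once $g\colon\bar V\longrightarrow Y$ is known to be finite of degree $2$, the matching of branch data that forces $\bar V\cong X$ becomes the clean concluding step.
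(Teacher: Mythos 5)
Your proposal is correct, but it is genuinely different in character from what the paper does: the paper's entire proof is the citation ``This follows from \cite[Remark 3.8]{CPS}'', whereas you reconstruct the content of that remark from scratch. Your route --- Hurwitz formula giving $-K_{V}=\rho^{*}M=\psi^{*}A$, semiampleness identifying the anticanonical model $\operatorname{Proj}\bigoplus_{m}H^{0}(V,-mK_{V})$ with the Stein factorization target of $\psi=\varphi_{\scriptscriptstyle|M|}\circ\rho$, smallness of $\varphi_{\scriptscriptstyle|M|}$ (Lemma~\ref{theorem:case-d-3-zero}) to see that only $\rho^{-1}(C)$ is contracted, and finally identification of the resulting finite degree-two cover of $Y$ with $X$ via uniqueness of the integral closure in a common quadratic function-field extension --- is exactly the kind of argument the cited remark encapsulates, so what you gain is self-containedness at the cost of length. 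Two points you should make explicit to be fully rigorous: first, normality of $V$ requires the branch divisor $B=\varphi_{\scriptscriptstyle|M|}^{*}(D)$ to be reduced, which holds because $D$ is the (reduced) branch divisor of the finite map $\varphi_{\scriptscriptstyle|-K_{X}|}$ from the normal threefold $X$ and $\varphi_{\scriptscriptstyle|M|}$ is small, so no exceptional divisors are picked up; second, the uniqueness step ``same branch divisor $\Rightarrow$ same quadratic extension of $\mathbb{C}(Y)$'' is not automatic for an arbitrary normal $Y$, but works here because $\varphi_{\scriptscriptstyle|M|}$ is small, hence $\mathrm{Cl}(Y)\simeq\mathrm{Pic}(\mathbb{F})\simeq\mathbb{Z}^{2}$ is torsion-free, so a degree-two cover of $Y$ branched exactly at $D$ in codimension one is unique up to isomorphism over $Y$. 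With those two clauses added, your argument is a complete and valid substitute for the reference.
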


\begin{proof}
This follows from \cite[Remark 3.8]{CPS}.
\end{proof}

Further, one has the following exact sequence:
\begin{equation}
\label{exact-sequence-aut}
1 \rightarrow G \longrightarrow \mathrm{Aut}(X) \stackrel{f}{\longrightarrow} \mathrm{Aut}(Y) \rightarrow 1,
\end{equation}
where $G$ is the group generated by Galois involution which
corresponds to $\varphi_{\scriptscriptstyle|-K_{X}|}$. Set
$\sigma: = f(\tau)$.

\begin{lemma}
\label{theorem:induced-action} In the above notation, involution
$\sigma$ lifts to the regular involution on $\mathbb{F}$.
\end{lemma}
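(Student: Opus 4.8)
The plan is to construct the lift explicitly, fibre by fibre, exploiting the $\mathbb{P}^{2}$-bundle structure of $\mathbb{F}$ together with the fact, established in Lemma~\ref{theorem:case-d-3-zero}, that $\varphi_{\scriptscriptstyle|M|}$ contracts only a single rational curve. Write $C_{0}$ for this exceptional curve and $P:=\varphi_{\scriptscriptstyle|M|}(C_{0})$ for its image, the unique singular point of the cone $Y$. First I would note that any $\sigma\in\mathrm{Aut}(Y)$ must fix $P$, and that, since $\varphi_{\scriptscriptstyle|M|}$ is a small contraction, it restricts to an isomorphism $\mathbb{F}\setminus C_{0}\xrightarrow{\sim}Y\setminus\{P\}$. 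Hence $\tilde\sigma:=\varphi_{\scriptscriptstyle|M|}^{-1}\circ\sigma\circ\varphi_{\scriptscriptstyle|M|}$ is already a biregular automorphism of $\mathbb{F}\setminus C_{0}$; the whole content of the lemma is that this map extends regularly across $C_{0}$ and that the extension squares to the identity.

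Next I would use the fibration $p\colon\mathbb{F}\to\mathbb{P}^{1}$. Because $d_{1}\geqslant d_{2}\geqslant d_{3}=0$, the bundle $\mathcal{E}=\bigoplus_{i}\mathcal{O}_{\mathbb{P}^{1}}(d_{i})$ is globally generated, so $H^{0}(\mathbb{F},M)$ surjects onto $H^{0}(\mathbb{F}_{t},M|_{\mathbb{F}_{t}})$ for every fibre $\mathbb{F}_{t}\cong\mathbb{P}^{2}$; as $M|_{\mathbb{F}_{t}}=\mathcal{O}_{\mathbb{P}^{2}}(1)$, the morphism $\varphi_{\scriptscriptstyle|M|}$ embeds each $\mathbb{F}_{t}$ as a plane $\Pi_{t}:=\varphi_{\scriptscriptstyle|M|}(\mathbb{F}_{t})\subset Y$. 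Injectivity of $\varphi_{\scriptscriptstyle|M|}$ away from $C_{0}$ forces $\Pi_{t}\cap\Pi_{t'}=\{P\}$ for $t\neq t'$, so the $\Pi_{t}$ are precisely the planes of the ruling of the cone $Y$, recovered by projecting from the vertex $P$ onto the base scroll surface $\mathbb{F}(d_{1},d_{2})$. It follows that $\sigma$ permutes the family $\{\Pi_{t}\}_{t\in\mathbb{P}^{1}}$ and induces an automorphism $\theta\in\mathrm{Aut}(\mathbb{P}^{1})$ with $\sigma(\Pi_{t})=\Pi_{\theta(t)}$.

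I would then assemble the lift fibrewise. For each $t$ the isomorphisms $\varphi_{t}:=\varphi_{\scriptscriptstyle|M|}|_{\mathbb{F}_{t}}\colon\mathbb{F}_{t}\xrightarrow{\sim}\Pi_{t}$ and $\varphi_{\theta(t)}\colon\mathbb{F}_{\theta(t)}\xrightarrow{\sim}\Pi_{\theta(t)}$ allow me to set $\tilde\sigma|_{\mathbb{F}_{t}}:=\varphi_{\theta(t)}^{-1}\circ(\sigma|_{\Pi_{t}})\circ\varphi_{t}\colon\mathbb{F}_{t}\xrightarrow{\sim}\mathbb{F}_{\theta(t)}$. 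These fibrewise isomorphisms vary algebraically with $t$ and glue to a biregular morphism $\tilde\sigma\colon\mathbb{F}\to\mathbb{F}$ lying over $\theta$ and satisfying $\varphi_{\scriptscriptstyle|M|}\circ\tilde\sigma=\sigma\circ\varphi_{\scriptscriptstyle|M|}$; off $C_{0}$ it agrees with $\varphi_{\scriptscriptstyle|M|}^{-1}\circ\sigma\circ\varphi_{\scriptscriptstyle|M|}$, so it is the sought extension. Finally, since $\sigma=f(\tau)$ and $\tau^{2}=\mathrm{id}$ give $\sigma^{2}=\mathrm{id}_{Y}$, the morphism $\tilde\sigma^{2}$ lies over $\theta^{2}=\mathrm{id}$, preserves every fibre, and satisfies $\varphi_{t}\circ(\tilde\sigma^{2}|_{\mathbb{F}_{t}})=\varphi_{t}$; injectivity of $\varphi_{t}$ yields $\tilde\sigma^{2}=\mathrm{id}$, so $\tilde\sigma$ is a regular involution on $\mathbb{F}$ lifting $\sigma$.

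The step I expect to be the main obstacle is proving that $\sigma$ genuinely preserves the ruling $\{\Pi_{t}\}$, i.e.\ that these planes are intrinsically attached to $Y$ and that the base scroll surface $\mathbb{F}(d_{1},d_{2})$ carries a \emph{unique} $1$-parameter family of lines. This is immediate when $d_{2}\geqslant 2$ or $d_{1}>d_{2}$, since then the non-ruling sections have $M$-degree $\geqslant 2$ and only the fibres are lines; an isolated directrix line (arising when $d_{2}=1$) does not affect this, as $\sigma$ cannot send a moving family of planes into finitely many exceptional ones. The only genuinely borderline configuration is $d_{1}=d_{2}=1$, where the base is a smooth quadric with two rulings that $\sigma$ might interchange; but that case gives $-K_{X}^{3}=2(d_{1}+d_{2})=4$ and is already excluded by Remark~\ref{remark:assumption}. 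Granting the uniqueness of the ruling, the fibrewise gluing and the involution check are routine.
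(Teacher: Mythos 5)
Your proof reaches the correct conclusion by a genuinely different route from the paper's. The paper's argument is pure MMP: it computes $K_{\mathbb{F}}\cdot C=d_{1}+d_{2}-2$ for the contracted curve $C$, invokes the standing assumption $-K_{X}^{3}\geqslant 8$ to get $K_{\mathbb{F}}\cdot C>0$, so that $K_{\mathbb{F}}$ is ample over $Y$ and $\mathbb{F}$ is a relatively minimal model over $Y$ admitting no flops; by Koll\'ar's theorem any two relatively minimal models over $Y$ are connected by flops, hence all of them are isomorphic to $\mathbb{F}$, and applying this to the $\sigma$-equivariant canonical model of a $\sigma$-equivariant resolution of $Y$ transports the $\sigma$-action onto $\mathbb{F}$. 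You instead unwind the cone geometry explicitly: $Y$ is a cone over the scroll $\mathbb{F}(d_{1},d_{2})$, planes through the vertex correspond to lines on that scroll, the ruling is the unique one-dimensional family of such lines, hence $\sigma$ respects the plane fibration and lifts fibrewise. Your identification of the key obstacle is accurate, and your handling of it is sound: the isolated directrix line for $d_{2}=1$ cannot absorb a moving family, and the genuinely dangerous case $d_{1}=d_{2}=1$ (two rulings) is excluded by $-K_{X}^{3}\geqslant 8$. It is a pleasant coincidence that both proofs use $-K_{X}^{3}\geqslant 8$, but for entirely different purposes: the paper needs $K_{\mathbb{F}}$ relatively ample, you need uniqueness of the ruling. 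The paper's method is shorter and immune to case analysis, at the price of heavy machinery (equivariant resolution, equivariant relative MMP, the flop theorem); yours is elementary and makes the lifted involution visible.

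Two steps in your write-up are under-justified, though both are fillable. First, to know that $\sigma$ carries planes to planes you need $\sigma$ to preserve $\mathcal{O}_{Y}(1)$, i.e.\ to extend to a linear automorphism of $\mathbb{P}^{n}$; this holds because $\sigma=f(\tau)$ and $\tau$ acts on $H^{0}(X,-K_{X})$, making the anticanonical morphism equivariant for a linear action downstairs, but it should be stated, since for an abstract automorphism of $Y$ the notion of ``plane'' is not intrinsic. Second, ``vary algebraically with $t$ and glue to a biregular morphism'' is precisely the delicate content of the lemma: a birational self-map which is biregular outside a curve need not be a morphism (flops have exactly this shape), so regularity across $C_{0}$ cannot be waved through. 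It is, however, quickly repaired with your own ingredients: let $\Gamma\subset\mathbb{F}\times\mathbb{F}$ be the closure of the graph of $\varphi_{|M|}^{-1}\circ\sigma\circ\varphi_{|M|}$; for a point $q\in C_{0}$ in the fibre $\mathbb{F}_{t}$, any $(q,q')\in\Gamma$ satisfies $\varphi_{|M|}(q')=\sigma(P)=P$ and, by continuity of $\theta\circ p=p\circ\tilde\sigma$ off $C_{0}$, also $p(q')=\theta(t)$; hence $q'$ is the single point $C_{0}\cap\mathbb{F}_{\theta(t)}$, the first projection $\Gamma\to\mathbb{F}$ is finite and birational onto a smooth variety, and Zariski's main theorem shows it is an isomorphism, so $\tilde\sigma$ is regular. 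With these two points supplied, your argument is complete.
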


\begin{proof}
We have $K_{\mathbb{F}} \sim -3M + (d_{1}+d_{2}-2)L$ (see \cite[A.
13]{Reid-surfaces}). Let $C \simeq \mathbb{P}^{1}$ be the
exceptional locus of $\varphi_{\scriptscriptstyle|M|}$ (see
Lemma~\ref{theorem:case-d-3-zero}). Then, since $C = M_{1} \cdot
M_{2}$ for general $M_{1} \in |M - d_{1}L|$ and $M_{2} \in |M -
d_{2}L|$ (see the proof of Theorem 2.5 in \cite{Reid-surfaces}),
we have $K_{\mathbb{F}} \cdot C = d_{1} + d_{2} - 2$ (see
\cite[A.4]{Reid-surfaces}).

If $d_{1} + d_{2} - 2 \leqslant 0$, then by
Lemma~\ref{theorem:formula-for-degree} we have $-K_{X}^{3} =
2(d_{1} + d_{2}) \leqslant 4$. This contradicts the assumption for
$-K_{X}^{3}$ (see Remark~\ref{remark:assumption}).

Now let $d_{1} + d_{2} - 2 > 0$. Then the divisor $K_{\mathbb{F}}$
is ample over $Y$. Hence $\mathbb{F}$ is a relatively minimal
model over $Y$. But every such model, which is birational to
$\mathbb{F}$, is either isomorphic to $\mathbb{F}$ or connected
with $\mathbb{F}$ by a sequence of flops over $Y$ (see
\cite[Theorem 4.3]{Kollar-flops}). Thus, in the present case all
such relatively minimal models over $Y$ are isomorphic to
$\mathbb{F}$. In particular, this holds for the
$\sigma$-equivariant canonical model of a $\sigma$-equivariant
resolution of $Y$ (see \cite{Kollar-Mori}).
\end{proof}

Let us again denote by $\sigma$ the lifting of involution $\sigma$
on $\mathbb{F}$.

\begin{lemma}
\label{theorem:sigma-invariant-linear-system} In the above
notation, linear system $|aM + bL|$ is $\sigma$-invariant on
$\mathbb{F}$ for every $a$, $b \in \mathbb{Z}$.
\end{lemma}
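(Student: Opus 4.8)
The plan is to reduce the $\sigma$-invariance of every complete linear system $|aM+bL|$ to a single statement about how $\sigma$ acts on the Picard lattice. Since $\mathbb{F}$ is a rational scroll we have $\mathrm{Pic}(\mathbb{F}) = \mathbb{Z}M \oplus \mathbb{Z}L$, so it suffices to prove the two identities $\sigma^{\ast}M = M$ and $\sigma^{\ast}L = L$ in $\mathrm{Pic}(\mathbb{F})$. Once these are known, $\sigma^{\ast}(aM+bL) = aM+bL$ for all $a,b \in \mathbb{Z}$, and a complete linear system attached to a $\sigma$-fixed divisor class is automatically carried onto itself by $\sigma$, which is exactly the assertion of the lemma.

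First I would establish $\sigma^{\ast}M = M$. The tautological system $|M|$ is precisely the system defining the morphism $\varphi_{\scriptscriptstyle|M|}\colon \mathbb{F} \longrightarrow Y \subset \mathbb{P}^{n}$: indeed $h^{0}(\mathbb{F}, M) = (d_{1}+d_{2}+0)+3 = d_{1}+d_{2}+3$, while $n = -\tfrac{1}{2}K_{X}^{3}+2 = d_{1}+d_{2}+2$ by Lemma~\ref{theorem:formula-for-degree}, so $h^{0}(\mathbb{F},M) = n+1$. By Lemma~\ref{theorem:induced-action} the lifting of $\sigma$ to $\mathbb{F}$ is the automorphism making $\varphi_{\scriptscriptstyle|M|}$ equivariant over the $\sigma$-action on $Y$; moreover $\sigma$ comes from $\mathrm{Aut}(X)$ via \eqref{exact-sequence-aut} and hence preserves $-K_{X}$, so the induced action on $Y$ fixes the class $\mathcal{O}_{Y}(1)$. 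Pulling back, $\sigma^{\ast}\mathcal{O}_{\mathbb{F}}(M) \cong \mathcal{O}_{\mathbb{F}}(M)$, i.e. $\sigma^{\ast}$ preserves $|M|$ and therefore $\sigma^{\ast}M = M$.

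Next I would pin down $\sigma^{\ast}L = L$ by playing $\sigma^{\ast}M = M$ against the canonical class. Every automorphism fixes the canonical class, so $\sigma^{\ast}K_{\mathbb{F}} = K_{\mathbb{F}}$, and $K_{\mathbb{F}} \sim -3M + (d_{1}+d_{2}-2)L$ by \cite[A.13]{Reid-surfaces}. Applying $\sigma^{\ast}$ and using $\sigma^{\ast}M = M$ gives $(d_{1}+d_{2}-2)\bigl(\sigma^{\ast}L - L\bigr) = 0$ in $\mathrm{Pic}(\mathbb{F})$. Since $\mathrm{Pic}(\mathbb{F})$ is torsion-free and $d_{1}+d_{2}-2 > 0$ (already shown in the proof of Lemma~\ref{theorem:induced-action}, using $-K_{X}^{3} \geqslant 8$ from Remark~\ref{remark:assumption}), we conclude $\sigma^{\ast}L = L$. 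Combining the two identities finishes the proof.

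The step requiring the most care is the first one: one must be certain that the automorphism of $\mathbb{F}$ produced by Lemma~\ref{theorem:induced-action} genuinely commutes with $\varphi_{\scriptscriptstyle|M|}$ and that $\sigma$ fixes $\mathcal{O}_{Y}(1)$, for otherwise $|M|$ need not be preserved. This is exactly where the equivariant-canonical-model description of $\mathbb{F}$ over $Y$ enters. As a sanity check one may note an alternative route to $\sigma^{\ast}M = M$: the nef cone of $\mathbb{F}$ is the rank-two cone spanned by $M$ and $L$, whose two boundary rays define morphisms of different type (the birational contraction $\varphi_{\scriptscriptstyle|M|}$ to the threefold $Y$ versus the fibration $\mathbb{F} \to \mathbb{P}^{1}$ given by $|L|$); an involution cannot interchange rays with such distinct contractions, so $\sigma^{\ast}$ fixes each ray, and torsion-freeness of $\mathrm{Pic}(\mathbb{F})$ together with $(\sigma^{\ast})^{2}=\mathrm{id}$ forces $\sigma^{\ast}M = M$ and $\sigma^{\ast}L = L$ directly. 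Either way, once $\sigma^{\ast}M = M$ is secured the remainder is formal linear algebra in the lattice $\mathrm{Pic}(\mathbb{F})$.
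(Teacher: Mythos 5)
Your proof is correct, but its main route is genuinely different from the paper's. The paper argues entirely inside $\mathrm{Pic}(\mathbb{F})$: it observes that $M$ and $L$ generate the cone of nef divisors (intersection with curves in the fibres forces $a \geqslant 0$, and intersection with the exceptional curve $C$, for which $M \cdot C = 0$ and $L \cdot C = 1$, forces $b \geqslant 0$), that $\sigma^{*}$ preserves this cone, and that the two extremal rays cannot be interchanged because $L^{3} = 0$ while $M^{3} > 0$; hence $|M|$ and $|L|$ are $\sigma$-invariant. This is essentially your ``sanity check'' argument, with the triple self-intersection playing the role that contraction type plays in your version. Your main route instead exploits the provenance of $\sigma$: equivariance of $\varphi_{\scriptscriptstyle|M|}$ over $Y$ (which does hold, since the lift in Lemma~\ref{theorem:induced-action} is constructed as an equivariant canonical model \emph{over} $Y$), together with the fact that $\sigma$ on $Y$ is the restriction of a projective-linear action on $\mathbb{P}^{n}$ coming from the action of $\tau$ on $H^{0}(X,-K_{X})$ via \eqref{exact-sequence-aut}, gives $\sigma^{*}M = M$; then $\sigma^{*}K_{\mathbb{F}} = K_{\mathbb{F}}$, the formula $K_{\mathbb{F}} \sim -3M + (d_{1}+d_{2}-2)L$, torsion-freeness of $\mathrm{Pic}(\mathbb{F})$ and $d_{1}+d_{2}-2 > 0$ force $\sigma^{*}L = L$. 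Both arguments are sound, but note one trade-off: the paper's cone argument uses nothing about $\sigma$ beyond its being an automorphism of $\mathbb{F}$, and this extra generality is quietly relied upon later, when Remark~\ref{remark:fully-determined} invokes this lemma to normalize \emph{every} regular involution of $\mathbb{F}$ into the form \eqref{precise-involution-1}, \eqref{precise-involution-2}; your main argument is tied to the particular $\sigma$ induced from $\mathrm{Aut}(X)$ and would not directly cover that later use. Conversely, your route avoids computing the nef cone altogether and makes explicit where the standing hypothesis $-K_{X}^{3} \geqslant 8$ (equivalently $d_{1}+d_{2}-2 > 0$) enters.
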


\begin{proof}
It follows from \cite[Lemma 2.7]{Reid-surfaces} that every divisor
$B$ on $\mathbb{F}$ is linearly equivalent to divisor $aM + bL$
for some $a$, $b \in \mathbb{Z}$. If $B$ is numerically effective,
then we have $a \geqslant 0$, since otherwise $B$ has negative
intersection with every curve in $L$. Moreover, for such $B$ we
have $b \geqslant 0$, since $M \cdot C = 0$ and $L \cdot C = 1$ in
the notation from the proof of Lemma~\ref{theorem:induced-action}.
Thus, divisors $L$ and $M$ generate the cone of numerically
effective divisors on $\mathbb{F}$. Since $\sigma$ preserves this
cone, $L^{3} = 0$ and $M^{3} > 0$ (see \cite[A.4]{Reid-surfaces}),
we obtain that the linear systems $|L|$ and $|M|$ are
$\sigma$-invariant. This implies the result we need.
\end{proof}

\begin{remark}
\label{remark:two-invariant-fibres} Since $\sigma^{*}|L| = |L|$
and $|L|$ is a pencil, there exist at least two $\sigma$-invariant
fibres $L_{0}$, $L_{1} \in |L|$ on $\mathbb{F}$.
\end{remark}

Set $M_{S}: =
\varphi_{\scriptscriptstyle|M|}^{*}(\varphi_{\scriptscriptstyle|-K_{X}|}(S))$
for the smooth $\tau$-invariant $\mathrm{K3}$ surface $S \in
|-K_{X}|$ on $X$. This is a $\sigma$-invariant divisor in $|M|$.
Set also $D': = \varphi_{\scriptscriptstyle|M|}^{*}(D)$. This is a
$\sigma$-invariant divisor in $|4M - 2(d_{1} + d_{2} - 2)L|$ (see
Lemma~\ref{theorem:case-d-3-zero} and \eqref{exact-sequence-aut}).
\begin{lemma}
\label{theorem:intersection-not-contain-fixed-points} In the above
notation, the set $D' \cap M_{S}$ does not contain $\sigma$-fixed
points.
\end{lemma}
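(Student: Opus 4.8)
The plan is to convert any $\sigma$-fixed point of $D' \cap M_S$ into a $\tau$-fixed point on $S$, which the freeness hypothesis forbids. Write $\varphi := \varphi_{|-K_X|} : X \to Y$ for the anticanonical double cover with branch divisor $D$, so that $M_S = \varphi_{|M|}^*(\varphi(S))$ and $D' = \varphi_{|M|}^*(D)$. Suppose $p \in D' \cap M_S$ satisfies $\sigma(p)=p$. First I would treat the case $p \notin C$, where $C$ is the exceptional curve of the small contraction $\varphi_{|M|}$ (Lemma~\ref{theorem:case-d-3-zero}); there $\varphi_{|M|}$ is a local isomorphism, so $y := \varphi_{|M|}(p)$ is a $\sigma$-fixed point of $Y$ lying on both $D$ and $\varphi(S)$.

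The heart of the argument is then the lifting step. Since $y$ lies on the branch divisor $D$, the fibre $\varphi^{-1}(y)$ is a single point $x$; and since $y \in \varphi(S)$, some point of $S$ maps to $y$ and hence coincides with $x$, so $x \in S$. Because $\sigma(y)=y$, the involution $\tau$ preserves the one-point fibre $\varphi^{-1}(y)=\{x\}$, forcing $\tau(x)=x$. This exhibits a $\tau$-fixed point on $S$, contradicting the freeness of the action, and so disposes of all $\sigma$-fixed points of $D' \cap M_S$ lying off $C$.

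The genuinely delicate case is $p \in C$, which $\varphi_{|M|}$ collapses to the vertex $v$ of the cone $Y$. Here I would recover the same data at $y = v$ using the intersection numbers from the proof of Lemma~\ref{theorem:induced-action}, namely $M \cdot C = 0$ and $L \cdot C = 1$. From $M_S \in |M|$ one gets $M_S \cdot C = 0$, and since $p \in C \cap M_S$ this forces $C \subset M_S$, hence $v = \varphi_{|M|}(C) \in \varphi(S)$. Likewise $D' \cdot C = -2(d_1+d_2-2) < 0$ (using $d_1 + d_2 - 2 > 0$, the only admissible case in Lemma~\ref{theorem:induced-action}), so $C \subset D'$ and therefore $v \in D$. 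Thus $v \in D \cap \varphi(S)$ is again $\sigma$-fixed, and the lifting step of the previous paragraph applies verbatim with $y = v$.

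I expect the main obstacle to be the bookkeeping at the vertex: one must be sure that $\varphi^{-1}(v)$ is a single (possibly singular) point even though $v$ is the singular point of $Y$, and that the numerical inclusion $C \subset D'$ genuinely records $v \in D$ rather than being an artefact of the pullback across the small contraction. Once the correspondence $p \mapsto y$ is set up and the fibre count of $\varphi$ over branch points is justified, every $\sigma$-fixed point of $D' \cap M_S$, whether on $C$ or not, yields a forbidden $\tau$-fixed point on $S$, which proves the lemma.
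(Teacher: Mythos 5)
Your proposal is correct and takes essentially the same route as the paper: the paper's proof is a one-line reduction via the exact sequence \eqref{exact-sequence-aut} --- a $\sigma$-fixed point of $D' \cap M_{S}$ forces a $\tau$-fixed point on $S$ --- and your write-up simply makes explicit the details it leaves implicit (singleton fibres of $\varphi_{\scriptscriptstyle|-K_{X}|}$ over the branch divisor, equivariance of $\varphi_{\scriptscriptstyle|M|}$, and the bookkeeping at the vertex, where in fact the simple observation that the support of $D'$, resp.\ $M_{S}$, maps into $D$, resp.\ $\varphi_{\scriptscriptstyle|-K_{X}|}(S)$, would let you skip the intersection-number computation).
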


\begin{proof}
If $D' \cap M_{S}$ contains a $\sigma$-fixed point, then the
surface $S$ contains a $\tau$-fixed point (see
\eqref{exact-sequence-aut}), which is impossible by assumption.
\end{proof}

\section{Proof of Theorem~\ref{theorem:main}}
\label{section:conclusion}
We use notation and conventions from Section~\ref{section:preliminaries}.\\
Threefold $\mathbb{F}$ is the factor of $\left(\mathbb{C}^{2}
\setminus{\{0\}}\right) \times \left(\mathbb{C}^{3}
\setminus{\{0\}}\right)$ by an action of the group
$(\mathbb{C}^{*})^{2}$ (see \cite[2.2]{Reid-surfaces}). Let us
denote by $[x_{0}:x_{1}:x_{2}]$ the projective coordinates on a
fibre $L \simeq \mathbb{P}^{2}$ of the natural projection
$\mathbb{F} \longrightarrow \mathbb{P}^{1}$. Let also
$[t_{0}:t_{1}]$ be projective coordinates on the base
$\mathbb{P}^{1}$. The functions $t_{i}$, $x_{j}$ are restrictions
of the coordinate functions on $\mathbb{C}^{2} \setminus{\{0\}}$
and $\mathbb{C}^{3} \setminus{\{0\}}$, respectively. For every
$a$, $b \in \mathbb{Z}$ it then follows that linear system
$|aM+bL|$ is generated by polynomials of the form
\begin{equation}
\label{linear-system-on-scroll-0}
g_{i_{1},i_{2},i_{3}}x_{0}^{i_{1}}x_{1}^{i_{2}}x_{2}^{i_{3}},
\end{equation}
where $i_{1}+i_{2}+i_{3} = a$, $i_{j} \geqslant 0$,
$g_{i_{1},i_{2},i_{3}}: = g_{i_{1},i_{2},i_{3}}(t_{0}:t_{1})$ is a
homogeneous polynomial of degree $b+d_{1}i_{1}+d_{2}i_{2}
\geqslant 0$ (see \cite[2.4]{Reid-surfaces}).

\begin{lemma}
\label{theorem:irreducible} General element in the linear system
$|4M - 2(d_{1} + d_{2} - 2)L|$ is irreducible.
\end{lemma}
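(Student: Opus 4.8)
The plan is to prove irreducibility of a general member of $|4M - 2(d_1+d_2-2)L|$ by writing down an explicit equation using the coordinate description \eqref{linear-system-on-scroll-0} and showing that a general such polynomial cannot factor nontrivially. First I would recall that a general element of the system can be written as a polynomial $F$ in $x_0,x_1,x_2$ of degree $4$ with coefficients $g_{i_1,i_2,i_3}(t_0:t_1)$ homogeneous of degree $-2(d_1+d_2-2)+d_1i_1+d_2i_2$ in $t_0,t_1$, and that these coefficients are required to be nonnegative-degree (hence possibly forced to vanish for small $i_1,i_2$). A reducible member would split as a product of lower-degree members of systems of the form $|aM+bL|$ and $|(4-a)M + (b'-b)L|$, so the strategy is to enumerate the possible degree splittings $4 = a_1 + a_2$ (with matching $L$-degrees summing correctly) and rule each out for general coefficients.

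The key steps, in order, are as follows. First I would argue on the generic fibre: restricting $F$ to a general fibre $L \simeq \mathbb{P}^2$ gives a degree-$4$ plane curve with general coefficients, which is irreducible, so any factorization of $F$ on $\mathbb{F}$ must come from a factorization that is ``constant in the fibre direction'' in a suitable sense — i.e.\ at least one factor must be pulled back from the base $\mathbb{P}^1$, or the factorization must be into a product of forms each of positive fibre-degree whose restriction to the general fibre still factors, contradicting genericity. Second, I would treat the possibility that a factor is a pullback from $\mathbb{P}^1$: such a factor lies in $|bL|$ for some $b>0$, and I would show that the full system $|4M - 2(d_1+d_2-2)L|$ has no fixed component of this type by examining whether $L$ divides every monomial in \eqref{linear-system-on-scroll-0}, which it does not once $d_2\neq 0$ (established in Lemma~\ref{theorem:case-d-3-zero}) and the degree bounds permit some $g_{i_1,i_2,i_3}$ with $i_3 = 4$ or with nonvanishing constant term. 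Third, I would exclude the genuinely mixed splittings $a_1,a_2 \in \{1,2,3\}$ by a dimension/parameter count: the reducible locus in the parameter space of coefficients is the image of the multiplication map from the (smaller-dimensional) product of the two sub-systems, so for general coefficients $F$ avoids it, provided that product of dimensions is strictly less than $\dim|4M-2(d_1+d_2-2)L|$.

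The main obstacle I anticipate is the bookkeeping of the $L$-degree constraint: the coefficients $g_{i_1,i_2,i_3}$ have degree $-2(d_1+d_2-2) + d_1 i_1 + d_2 i_2$, which is negative (forcing the coefficient to be zero) for small $i_1,i_2$, so the system is \emph{not} the full set of degree-$4$ fibre polynomials and the generic-fibre irreducibility argument needs care — I must check that enough monomials survive to guarantee a general plane quartic, using $d_1 \geqslant d_2 > 0$ and the degree lower bound $d_1 + d_2 - 2 > 0$ coming from the proof of Lemma~\ref{theorem:induced-action}. The cleanest route will likely be to verify directly that the restriction to a general fibre is surjective onto all quartic forms (or at least onto a linear system whose general member is irreducible), because then the only way $F$ can be reducible is to carry a fixed component pulled back from the base, and that I can eliminate by exhibiting a single explicit monomial in the system with $L$-degree forcing coprimality to $L$. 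I would therefore spend most of the effort confirming the surjectivity-on-the-general-fibre claim and handling the finitely many exceptional $(d_1,d_2)$ at the bottom of the allowed range separately if needed.
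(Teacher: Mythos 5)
Your proposal has a fundamental gap: it treats the lemma as a purely combinatorial statement about the linear system $|4M-2(d_1+d_2-2)L|$ on the scroll $\mathbb{F}(d_1,d_2,0)$, but read that way the statement is \emph{false} for some pairs $(d_1,d_2)$ that are still in play at this point of the paper. Indeed, if $d_1 > d_2+2$, then the degree $-2(d_1+d_2-2)+d_1i_1+d_2i_2$ of the coefficient $g_{i_1,i_2,i_3}$ in \eqref{linear-system-on-scroll-0} is negative whenever $i_1=0$ (since $4d_2 < 2(d_1+d_2-2)$), so \emph{every} monomial of the system is divisible by $x_0$; hence every member contains the surface $\{x_0=0\}\in|M-d_1L|$ and the general member is reducible, already on the general fibre. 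Pairs such as $(d_1,d_2)=(5,1)$ or $(6,2)$ satisfy all the constraints established before this lemma ($d_2\neq 0$ from Lemma~\ref{theorem:case-d-3-zero}, $d_1+d_2$ even and $\geqslant 4$ from Lemmas~\ref{theorem:degree-is-divisible-by-four} and \ref{theorem:formula-for-degree}), and they occur in Table 1 of \cite{CPS}: they correspond to genuine hyperelliptic Fano threefolds, just not to ones admitting the required free involution. Consequently the surjectivity-onto-quartics check that you plan to ``spend most of the effort confirming'' must fail for exactly these pairs, and the degree bounds $d_1\geqslant d_2>0$, $d_1+d_2-2>0$ that you invoke cannot rescue it; no argument that never mentions the involution can prove this lemma, because the lemma is precisely the step that uses the involution to exclude these cases.

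The paper's proof is a proof by contradiction built on the equivariant data. If the general member of $\mathcal{R}$ is reducible, then by Table 1 of \cite{CPS} one has $d_1>d_2$ and $\mathcal{R}$ is generated by monomials with $i_1>0$, so the fixed component $R=\{x_0=0\}\in|M-d_1L|$ is contained in every member, in particular in $D'=\varphi_{\scriptscriptstyle|M|}^{*}(D)$. Since $|M-d_1L|$ is generated by $x_0$ alone and is $\sigma$-invariant by Lemma~\ref{theorem:sigma-invariant-linear-system}, the surface $R$ is $\sigma$-invariant; restricting to the two $\sigma$-invariant fibres $L_0,L_1$ of Remark~\ref{remark:two-invariant-fibres}, the $\sigma$-invariant lines $R|_{L_i}$ and $M_S|_{L_i}$ in $L_i\simeq\mathbb{P}^2$ meet in a $\sigma$-fixed point, which lies in $D'\cap M_S$ because $R\subset D'$ --- contradicting Lemma~\ref{theorem:intersection-not-contain-fixed-points}, i.e.\ ultimately the freeness of the $\tau$-action on $S$. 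So the lemma is a constraint on which scrolls can arise from a threefold $X$ carrying the free involution, not an intrinsic property of these linear systems; your proposal is missing this essential idea.
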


\begin{proof}
Let general element in $\mathcal{R}:=|4M - 2(d_{1} + d_{2} - 2)L|$
be reducible. Then, according to Table 1 in the proof of Theorem
1.5 in \cite{CPS}, we have $d_{1}
> d_{2}$, and $\mathcal{R}$ is generated by polynomials in \eqref{linear-system-on-scroll-0} with $a = 4$, $b = 2(2 - d_{1} -
d_{2})$ and $i_{1} > 0$. In particular, divisor $D' =
\varphi_{\scriptscriptstyle|M|}^{*}(D)$ contains the surface $R
\in |M - d_{1}L|$, given by equation $x_{0} = 0$.

Since $d_{1} > d_{2}$, it follows from
\eqref{linear-system-on-scroll-0} that the linear system $|M -
d_{1}L|$ is generated by $x_{0}$. Then by
Lemma~\ref{theorem:sigma-invariant-linear-system} we obtain that
$R = \sigma(R)$. Let $L_{0}$, $L_{1} \in |L|$ be two
$\sigma$-invariant fibres on $\mathbb{F}$ (see
Remark~\ref{remark:two-invariant-fibres}). Then $R|_{L_{i}}$ and
$M_{S}|_{L_{i}}$ are $\sigma$-invariant lines on $L_{i} \simeq
\mathbb{P}^{2}$, $i \in \{0,1\}$. In particular, the sets $R \cap
M_{S} \cap L_{i}$ contain at least one $\sigma$-fixed point each.
But $R \cap M_{S} \cap L_{i} \subset D' \cap M_{S}$. Thus, we get
a contradiction with
Lemma~\ref{theorem:intersection-not-contain-fixed-points}.
\end{proof}

According to Table 1 in the proof of Theorem 1.5 in \cite{CPS} and
Lemmas~\ref{theorem:degree-is-divisible-by-four},
\ref{theorem:formula-for-degree}, \ref{theorem:irreducible} one
gets only the following possibilities for $(d_{1},d_{2})$:
\begin{equation}
\label{possibilities-for-d-i}
(2,2),\hspace{5pt} (3,1),\hspace{5pt} (3,3),\hspace{5pt} (4,2),\hspace{5pt} (4,4),\hspace{5pt} (5,3),\hspace{5pt} (6,4),
\hspace{5pt} (7, 5),\hspace{5pt} (8,6).
\end{equation}
This and Lemmas~\ref{theorem:factor-Fano-Enriques},
\ref{theorem:double-cover} imply that to prove
Theorem~\ref{theorem:main} it remains to show that for every pair
$(d_{1},d_{2})$ in \eqref{possibilities-for-d-i} there is a Fano
threefold $X$ with canonical Gorenstein singularities such that
$X$ possess a regular involution, which acts freely on some smooth
$\mathrm{K3}$ surface in $|-K_{X}|$, and the linear system
$|-K_{X}|$ gives a morphism which is not an embedding.

Set $\mathbb{F}: = \mathbb{F}(d_{1},d_{2},0)$ for $(d_{1},d_{2})$
in \eqref{possibilities-for-d-i}. Let us use previous notation for
coordinates on the base $\mathbb{P}^{1}$ and on a fibre $L \simeq
\mathbb{P}^{2}$ of the natural projection $\mathbb{F}
\longrightarrow \mathbb{P}^{1}$. We define regular involution
$\sigma$ on $\mathbb{F}$ by the following relations:
\begin{equation}
\label{precise-involution-1}
\sigma^{*}(t_{0}) = t_{0},\qquad \sigma^{*}(t_{1}) = -t_{1}
\end{equation}

and
\begin{equation}
\label{precise-involution-2}
\sigma^{*}(x_{0}) = -x_{0},\qquad \sigma^{*}(x_{1}) = x_{1},\qquad \sigma^{*}(x_{2}) = -x_{2}.
\end{equation}

\begin{remark}
\label{remark:fully-determined} Since $t_{i}$, $x_{j}$ are
restrictions of the coordinate functions on $\mathbb{C}^{2}
\setminus{\{0\}}$ and $\mathbb{C}^{3} \setminus{\{0\}}$,
respectively, \eqref{precise-involution-1} and
\eqref{precise-involution-2} commute with the action of the group
$(\mathbb{C}^{*})^{2}$, the action of $\sigma$ on $\mathbb{F}$ is
completely determined by relations \eqref{precise-involution-1}
and \eqref{precise-involution-2}. On the other hand, from
Lemma~\ref{theorem:sigma-invariant-linear-system} it is easy to
see that up to the sign change every regular involution on
$\mathbb{F}$ is determined by relations of the form
\eqref{precise-involution-1} and \eqref{precise-involution-2}.
\end{remark}

Let us denote by $C$ the curve on $\mathbb{F}$, given by equations
$x_{0} = x_{1} = 0$. We prove the following
\begin{proposition}
\label{theorem:linear-systems-D-M} In the above notation, there
are linear systems $\mathcal{D} \subseteq |4M - 2(d_{1} + d_{2} -
2)L|$, $\mathcal{M} \subseteq |M|$, where $M$ is the class of
tautological divisor on $\mathbb{F}$, such that
\begin{itemize}
\item $\dim \mathcal{D}$, $\dim \mathcal{M} > 0$;
\item $\mathcal{D}$ consists of $\sigma$-invariant divisors, $\mathrm{Bs}(\mathcal{D}) = C$ and $\mathrm{mult}_{C}(\mathcal{D}) \leqslant 3$;
\item $\mathcal{M}$ consists of $\sigma$-invariant divisors and $\mathrm{Bs}(\mathcal{M}) \cap C = \emptyset$;
\item double cover of $\mathbb{F}$ with ramification at general divisor in $\mathcal{D}$ has canonical
singularities;
\item for general divisors $D_{0} \in \mathcal{D}$, $M_{0} \in \mathcal{M}$ and the set of $\sigma$-fixed points $\mathbb{F}^{\sigma}$
on $\mathbb{F}$ we have $M_{0} \cap D_{0} \cap \mathbb{F}^{\sigma}
= \emptyset$.
\end{itemize}
\end{proposition}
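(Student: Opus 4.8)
The plan is to construct $\mathcal{D}$ and $\mathcal{M}$ explicitly using the monomial description \eqref{linear-system-on-scroll-0} and the involution \eqref{precise-involution-1}--\eqref{precise-involution-2}, then verify the five bullet points one by one. First I would describe which monomials $g_{i_1,i_2,i_3}x_0^{i_1}x_1^{i_2}x_2^{i_3}$ are $\sigma$-invariant: under $\sigma$ each $x_j$ picks up a sign ($x_0,x_2 \mapsto -x_0,-x_2$ and $x_1\mapsto x_1$), while the coefficient $g_{i_1,i_2,i_3}(t_0:t_1)$ transforms by $t_1\mapsto -t_1$, so invariance forces a parity condition relating $i_1+i_3$ to the degree in $t_1$. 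Taking $\mathcal{D}$ to be the subsystem of $|4M - 2(d_1+d_2-2)L|$ spanned by the $\sigma$-invariant monomials that vanish on $C = (x_0=x_1=0)$ to order at most $3$, and $\mathcal{M}$ the subsystem of $|M|$ spanned by $\sigma$-invariant monomials not all vanishing on $C$, I would then check the conditions.

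The base-locus and multiplicity statements should follow from direct monomial bookkeeping: since $C$ is cut out by $x_0=x_1=0$, a divisor contains $C$ iff every monomial in it is divisible by $x_0$ or $x_1$, i.e. has $i_3 < 4$; the multiplicity along $C$ is controlled by the minimal total power of $(x_0,x_1)$ appearing, which I would arrange to be at most $3$ by including a monomial with $i_3 = 1$ (forcing $\mathrm{mult}_C \le 3$) while excluding any purely-$x_2$ term (forcing $C \subseteq \mathrm{Bs}(\mathcal{D})$). For $\mathcal{M}$, including the monomial $x_2$ (which is $\sigma$-invariant after pairing with a suitable degree-$d_i$ form in $t$, for those $(d_1,d_2)$ where this parity works) guarantees $\mathrm{Bs}(\mathcal{M})\cap C = \emptyset$. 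I expect this to require a short case check across the nine pairs in \eqref{possibilities-for-d-i}, but no single case should be hard. The positivity of $\dim\mathcal{D}$ and $\dim\mathcal{M}$ follows once two independent invariant monomials are exhibited in each.

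The verification that the double cover branched along a general $D_0\in\mathcal{D}$ has canonical singularities is where I expect the real work to lie. Because $\mathrm{Bs}(\mathcal{D}) = C$ is a curve, $D_0$ is not smooth along $C$, and the cover acquires singularities over $C$; the point is to bound $\mathrm{mult}_C(\mathcal{D}) \le 3$ and invoke the standard inversion-of-adjunction/canonical-threshold criterion for double covers, so that the transverse multiplicity-$\le 3$ singularity along the rational curve $C$ stays canonical. I would reduce to a local analytic model of the branch divisor transverse to $C$ and compute the discrepancy of the blow-up of $C$, using that the generic transverse singularity of a general member is an $A_k$ or $D_k$ type with $k$ small enough. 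The last bullet, $M_0\cap D_0\cap\mathbb{F}^\sigma = \emptyset$, I would obtain by computing $\mathbb{F}^\sigma$ explicitly from \eqref{precise-involution-1}--\eqref{precise-involution-2} — it consists of the two invariant fibres over $t_1=0$ and $t_0=0$ intersected with the coordinate loci where an even/odd number of the $x_j$ vanish — and then checking that a general invariant $M_0$ and general invariant $D_0$ can be chosen to miss this finite-plus-low-dimensional fixed locus simultaneously, which is a genericity argument once $\dim\mathcal{D},\dim\mathcal{M}>0$ and $\mathbb{F}^\sigma$ is seen to impose nontrivial independent conditions.
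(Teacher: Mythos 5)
Your construction of $\mathcal{D}$ and $\mathcal{M}$ by monomial/parity bookkeeping is exactly what the paper does (its Tables 1 and 3), and your description of $\mathbb{F}^{\sigma}$ and the final genericity argument is close to the paper's Lemma on empty intersection --- though note that a general $M_{0}$ can \emph{never} miss $\mathbb{F}^{\sigma}$: every member of $\mathcal{M}$ passes through the point $O_{i}=(t_{i}=x_{0}=x_{2}=0)$, which is a base point of $\mathcal{M}|_{L_{i}}$, so the argument must run through ``$O_{i}\notin D_{0}$ and $D_{0}\cap l_{i}$ is finite'' rather than through $M_{0}$ avoiding the fixed locus.

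The genuine gap is in the canonical-singularities step, which you correctly identify as the heart of the matter but whose proposed treatment would fail. First, the ``canonical-threshold criterion for double covers'' you invoke does not exist in the form you need: the equivalence between properties of the double cover $V$ and of the pair $\left(\mathbb{F},\tfrac12 D_{0}\right)$ holds for klt/lc but \emph{not} for canonical, because discrepancies transform by $a(E')+1=r\,(a(E)+1)$ under a cover ramified with index $r$ along $E$. Concretely, here $\mathrm{mult}_{C}(D_{0})=3$ in several cases (e.g. $(8,6)$, where $D_{0}$ contains $x_{0}^{3}x_{2}$), so blowing up $C$ gives $a\bigl(E;\mathbb{F},\tfrac12 D_{0}\bigr)=1-\tfrac32<0$: the pair is \emph{not} canonical, yet the double cover $x^{2}+y^{3}+z^{4}+\dots=0$ is a $\mathrm{cE_{6}}$ point and \emph{is} canonical. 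Your criterion would thus either reject a correct construction or prove nothing. Second, computing the discrepancy of the blow-up of $C$ using the \emph{generic} transverse slice (which, incidentally, is of type $E_{6}$ in some cases, not $A_{k}$ or $D_{k}$) only controls divisors over the generic point of $C$; canonicity can fail at finitely many special points of $C$, and that is precisely where the paper's work lies. The paper uses \cite[Corollary 2.7]{CPS} to reduce to showing that for \emph{every} point $p\in C$ there exists \emph{some} member $D_{0}\in\mathcal{D}$ whose double cover is canonical over $p$, then checks this pointwise via the local equations of Table 2 (cDV at most points), and in the case $(d_{1},d_{2})=(7,5)$ at the special point $p=[0:1]$ the singularity is not cDV at all and requires an explicit crepant weighted blow-up with weights $(2,1,1,1)$ producing a smooth threefold. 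Without this pointwise criterion and the special-point analysis, your argument does not establish the fourth bullet of the Proposition.
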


\begin{proof}
The conditions $\sigma(D_{0}) = D_{0}$, $\mathrm{Bs}(\mathcal{D})
= C$, $\mathrm{mult}_{C}(\mathcal{D}) \leqslant 3$ and
\eqref{linear-system-on-scroll-0}, \eqref{precise-involution-1},
\eqref{precise-involution-2} imply that the equation of general
divisor $D_0\in\mathcal{D}$ for $(d_{1},d_{2})$ in
\eqref{possibilities-for-d-i} must be one of the following:

\begin{center}
\begin{tabular}{|c|c|}
\hline
$(d_1, d_2)$ & \mbox{equation of $D_0$}\\
\hline
$(2,2)$ & $\alpha x_{0}^{2}x_{2}^{2} + \beta x_{1}^{2}x_{2}^{2} + \gamma t_{0}^{4}x_{0}^{4} + \gamma't_{1}^{4}x_{0}^{4} +
\delta t_{0}^{4}x_{1}^{4} + \delta't_{1}^{4}x_{1}^{4} + P_{1} = 0$\\
\hline
$(3,1)$ & $\alpha t_{0}^{2}x_{0}^{2}x_{2}^{2} + \alpha' t_{1}^{2}x_{0}^{2}x_{2}^{2} + \beta x_{1}^{4} + \gamma t_{0}^{8}x_{0}^{4} +
\gamma't_{1}^{8}x_{0}^{4} + P_{2} = 0$\\
\hline
$(3,3)$ & $\alpha t_{0}x_{0}^{3}x_{2} + \beta t_{1}x_{1}^{3}x_{2} + \gamma t_{0}^{4}x_{0}^{4} + \gamma' t_{1}^{4}x_{0}^{4}
+ \delta t_{0}^{4}x_{1}^{4} + \delta' t_{1}^{4}x_{1}^{4} + P_{3} = 0$\\
\hline
$(4,2)$ & $\alpha x_{0}^{2}x_{2}^{2} + \beta x_{1}^{4} + \gamma t_{0}^{8}x_{0}^{4} + \gamma't_{1}^{8}x_{0}^{4} + P_{4} = 0$\\
\hline
$(4,4)$ & $\alpha x_{0}^{3}x_{2} + \beta t_{0}^{4}x_{0}^{4} + \beta' t_{1}^{4}x_{0}^{4} +
\gamma t_{0}^{4}x_{1}^{4} + \gamma't_{1}^{4}x_{1}^{4} + P_{5} = 0$\\
\hline
$(5,3)$ & $\alpha t_{0}^{3}x_{0}^{3}x_{2} + \beta t_{1}x_{0}^{2}x_{1}x_{2} +
\gamma x_{1}^{4} + \delta t_{0}^{8}x_{0}^{4} + \delta't_{1}^{8}x_{0}^{4} + P_{6} = 0$\\
\hline
$(6,4)$ & $\alpha t_{0}^{2}x_{0}^{3}x_{2} + \alpha' t_{1}^{2}x_{0}^{3}x_{2} + \beta x_{1}^{4} + \gamma t_{0}^{8}x_{0}^{4} + \gamma't_{1}^{8}x_{0}^{4} + P_{7} = 0$\\
\hline
$(7,5)$ & $\alpha t_{0}x_{0}^{3}x_{2} + \beta x_{1}^{4} +
\gamma t_{0}^{8}x_{0}^{4} + \gamma't_{1}^{8}x_{0}^{4} + P_{8} = 0$\\
\hline
$(8,6)$ & $\alpha x_{0}^{3}x_{2} + \beta x_{1}^{4} + \gamma t_{0}^{8}x_{0}^{4} + \gamma't_{1}^{8}x_{0}^{4} + P_{9} = 0$\\
\hline
\end{tabular}
\begin{center}
\mbox{{\small Table 1.}}
\end{center}
\end{center}

Throughout the Table 1 $\alpha$, $\beta$, $\gamma$, $\delta$,
$\alpha'$, $\beta'$, $\gamma'$, $\delta' \in \mathbb{C}$, $P_{i}:
= P_{i}(t_{0},t_{1},x_{0},x_{1},x_{2})$ is a polynomial of degree
$\geqslant 3$ in $x_{0}$, $x_{1}$ such that $\sigma^{*}(P_{i}) =
P_{i}$ and $P_{i}(t_{0},t_{1},0,0,x_{2}) = 0$ for $1 \leqslant i
\leqslant 9$.

\begin{lemma}
\label{theorem:canonical-singularities-for-double-covers} Double
cover of $\mathbb{F}$ with ramification at general divisor in
$\mathcal{D}$ has canonical singularities.
\end{lemma}

\begin{proof}
According to \cite[Corollary 2.7]{CPS} and condition
$\mathrm{Bs}(\mathcal{D}) = C$, it is enough to show that for
every point $p$ on the curve $C$ there is a divisor $D_{0} \in
\mathcal{D}$ such that the double cover $\varphi: V
\longrightarrow \mathbb{F}$ of $\mathbb{F}$ with ramification at
$D_{0}$ has canonical singularity at the point $o: =
\varphi^{-1}(p)$.

Put $x_{0} = y$, $x_{1} = z$, $x_{2} = 1$ in equations from Table
1. We obtain:

\begin{center}
\begin{tabular}{|c|c|}
\hline
$(d_1, d_2)$ & \mbox{equation of $V$ in the neighborhood of $o$ with local coordinates $x$, $y$, $z$}\\
\hline
$(2,2)$ & $x^{2} + \alpha y^{2} + \beta z^{2} + \gamma t_{0}^{4}y^{4} + \gamma't_{1}^{4}y^{4} +
\delta t_{0}^{4}z^{4} + \delta't_{1}^{4}z^{4} + Q_{1} = 0$\\
\hline
$(3,1)$ & $x^{2} + \alpha t_{0}^{2}y^{2} + \alpha' t_{1}^{2}y^{2} + \beta z^{4} + \gamma t_{0}^{8}y^{4} + \gamma't_{1}^{8}y^{4} + Q_{2} = 0$\\
\hline
$(3,3)$ & $x^{2} + \alpha t_{0}y^{3} + \beta t_{1}z^{3} + \gamma t_{0}^{4}y^{4} + \gamma' t_{1}^{4}y^{4}
+ \delta t_{0}^{4}z^{4} + \delta' t_{1}^{4}z^{4}+ Q_{3} = 0$\\
\hline
$(4,2)$ & $x^{2} + \alpha y^{2} + \beta z^{4} + \gamma t_{0}^{8}y^{4} + \gamma't_{1}^{8}y^{4} + Q_{4} = 0$\\
\hline
$(4,4)$ & $x^{2} + \alpha y^{3} + \beta t_{0}^{4}y^{4} + \beta' t_{1}^{4}y^{4} + \gamma t_{0}^{4}z^{4} + \gamma' t_{1}^{4}z^{4}
+ Q_{5} = 0$\\
\hline
$(5,3)$ & $x^{2} + \alpha t_{0}^{3}y^{3} + \beta t_{1}y^{2}z + \gamma z^{4} + \delta t_{0}^{8}y^{4} + \delta't_{1}^{8}y^{4} + Q_{6} = 0$\\
\hline
$(6,4)$ & $x^{2} + \alpha t_{0}^{2}y^{3} + \alpha' t_{1}^{2}y^{3} +
\beta z^{4} + \gamma t_{0}^{8}y^{4} + \gamma' t_{1}^{8}y^{4} + Q_{7} = 0$\\
\hline
$(8,6)$ & $x^{2} + \alpha y^{3} + \beta z^{4} + \gamma t_{0}^{8}y^{4} + \gamma't_{1}^{8}y^{4} + Q_{9} = 0$\\
\hline
\end{tabular}

\begin{center}
\mbox{{\small Table 2.}}
\end{center}
\end{center}

Throughout the Table 2 $Q_{i}: = P_{i}(t_{0},t_{1},x,y,1)$. It
follows that for $(d_{1},d_{2}) \ne (7,5)$ for every point $p =
[t_0:t_1]$ on the curve $C$ there is a divisor $D_{0} \in
\mathcal{D}$ such that $o = \varphi^{-1}(p) \in V$ is a
$\mathrm{cDV}$ singularity and hence canonical (see
\cite{Reid-canonical-threefolds}).

For $(d_{1},d_{2}) = (7,5)$ in the neighborhood of $o$ with local
coordinates $x$, $y$, $z$ threefold $V$ is given by equation (see
Table 1):
\begin{eqnarray}
\label{equation-in-7-5-case}
x^{2} + \alpha t_{0}y^{3} + \beta z^{4}
+ \gamma t_{0}^{8}y^{4} + \gamma't_{1}^{8}y^{4} + Q_{8} = 0,
\end{eqnarray}
where $Q_{8}: = P_{8}(t_{0},t_{1},x,y,1)$. If $p = [t_0:t_1]$ is a
point on the curve $C$ with $t_{0} \ne 0$, then one may put $t_0 =
1$, $t_1 = t$ and find the equation of $V$ in the neighborhood of
$o$ with local coordinates $x$, $y$, $z$, $t$:
$$
x^{2} + \alpha y^{3} + \beta z^{4} + \gamma y^{4} +
\gamma't^{8}y^{4} + Q'_{8} = 0,
$$
where $Q'_{8}: = Q_{8}(1,t,x,y,1)$. Then \cite[Theorem 2.10]{CPS}
implies that for general divisor $D_{0}$ singularity $o \in V$ is
$\mathrm{cE_{6}}$.

Now let $p = [0:1]$. Then in \eqref{equation-in-7-5-case} one may
put $t_{0} = t$, $t_{1} = 1$ and find the equation of $V$ in the
neighborhood of $o$ with local coordinates $x$, $y$, $z$, $t$:
\begin{equation}
\nonumber
x^{2} + \alpha ty^{3} + \beta z^{4} + \gamma t^{8}y^{4} + \gamma' y^{4} + Q'_{8} = 0,
\end{equation}
where $Q'_{8}: = Q_{8}(t,1,x,y,1)$. It is easy to see that the
weighted blow up $\widetilde{V} \longrightarrow V$ at the point
$o$ with weights (2,~1,~1,~1) is crepant (see the proof of Theorem
2.11 in \cite{Reid-canonical-threefolds}) and the threefold
$\widetilde{V}$ is smooth. Hence for general divisor $D_{0}$
singularity $o \in V$ is canonical.
Lemma~\ref{theorem:canonical-singularities-for-double-covers} is
completely proved.
\end{proof}

Further, the conditions $\sigma(M_{0}) = M_{0}$,
$\mathrm{Bs}(\mathcal{M}) \cap C = \emptyset$ and
\eqref{linear-system-on-scroll-0}, \eqref{precise-involution-1},
\eqref{precise-involution-2} imply that the equation of general
divisor $M_{0} \in \mathcal{M}$ for $(d_{1},d_{2})$ in
\eqref{possibilities-for-d-i} must be one of the following:
\begin{center}
\begin{tabular}{|c|c|}
\hline
$(d_1, d_2)$ & \mbox{equation of $M_0$}\\
\hline
$(2,2)$ & $at_{0}^{2}x_{0} + bt_{1}^{2}x_{0} + cx_{2} + F_{1} = 0$\\
\hline
$(3,1)$ & $at_{0}^{3}x_{0} + bt_{1}x_{1} + cx_{2} + F_{2} = 0$\\
\hline
$(3,3)$ & $at_{0}^{3}x_{0} + bt_{1}^{3}x_{1} + cx_{2} + F_{3} = 0$\\
\hline
$(4,2)$ & $at_{0}^{4}x_{0} + bt_{1}^{4}x_{0} + cx_{2} + F_{4} = 0$\\
\hline
$(4,4)$ & $at_{0}^{4}x_{0} + bt_{1}^{4}x_{0} + cx_{2} + F_{5} = 0$\\
\hline
$(5,3)$ & $at_{0}^{5}x_{0} + bt_{1}^{3}x_{1} + cx_{2} + F_{6} = 0$\\
\hline
$(6,4)$ & $at_{0}^{6}x_{0} + bt_{1}^{6}x_{0} + cx_{2} + F_{7} = 0$\\
\hline
$(7,5)$ & $at_{0}^{7}x_{0} + bt_{1}^{5}x_{1} + cx_{2} + F_{8} = 0$\\
\hline
$(8,6)$ & $at_{0}^{8}x_{0} + bt_{1}^{8}x_{0} + cx_{2} + F_{9} = 0$\\
\hline
\end{tabular}

\begin{center}
\mbox{{\small Table 3.}}
\end{center}
\end{center}

Throughout the Table 3 $a$, $b$, $c \in \mathbb{C}$,
$F_{i}:=F_{i}(t_{0},t_{1},x_{0},x_{1})$ is a polynomial of degree
$1$ in $x_{0}$, $x_{1}$ such that $\sigma^{*}(F_{i}) = -F_{i}$ and
$F_{i}(t_{0},t_{1},0,0) = 0$ for $1 \leqslant i \leqslant 9$.

\begin{lemma}
\label{theorem:empty-intersection} For general divisors $D_{0} \in
\mathcal{D}$, $M_{0} \in \mathcal{M}$ and the set of
$\sigma$-fixed points $\mathbb{F}^{\sigma}$ on $\mathbb{F}$ we
have $M_{0} \cap D_{0} \cap \mathbb{F}^{\sigma} = \emptyset$.
\end{lemma}

\begin{proof}
From \eqref{precise-involution-1}, \eqref{precise-involution-2} we
obtain the equations for $\mathbb{F}^{\sigma}$:
\begin{equation}
\nonumber
t_{0}t_{1} = x_{1}x_{0} = x_{1}x_{2} = 0.
\end{equation}
This implies that $\mathbb{F}^{\sigma} = l_{1} \cup l_{2} \cup
O_{1} \cup O_{2}$, where $l_{i} = (t_{i} = x_{1} = 0)$ and $l_{i}
\not \ni O_{i} = (t_{i} = x_{0} = x_{2})$ are a curve and a point
on the fibre $L_{i} = (t_{i} = 0)$, $i \in \{0, 1\}$,
respectively.

It follows from equations in Tables 1 and 3 that $O_{i} =
\mathrm{Bs}\left(\mathcal{M}|_{L_{i}}\right)$, $O_{i} \not\in
D_{0}$ and the set $D_{0} \cap l_{i}$ is finite, $i \in \{0, 1\}$.
Then, since $\mathcal{M}|_{L_{i}}$ is a pencil of lines on $L_{i}
\simeq \mathbb{P}^{2}$, we obtain that $M_{0} \cap D_{0} \cap
\mathbb{F}^{\sigma} = \emptyset$.
\end{proof}

From
Lemmas~\ref{theorem:canonical-singularities-for-double-covers},
\ref{theorem:empty-intersection} and Tables 1, 3 we obtain the
assertion of Proposition~\ref{theorem:linear-systems-D-M}.
\end{proof}

Let $\mathcal{D}$, $\mathcal{M}$ be the linear systems from
Proposition~\ref{theorem:linear-systems-D-M} and $D_{0} \in
\mathcal{D}$, $M_{0} \in \mathcal{M}$ be general divisors. Let us
denote by $\varphi: V \longrightarrow \mathbb{F}$ the double cover
of $\mathbb{F}$ with ramification at $D_{0}$. By
Proposition~\ref{theorem:linear-systems-D-M} threefold $V$ has
canonical singularities. Moreover, from the Hurwitz formula we
obtain
\begin{equation}
\label{hurwitz}
-K_{V} \sim \varphi^{*}(M).
\end{equation}
Thus, $V$ is a weak Fano threefold with canonical Gorenstein
singularities. Furthermore, by construction, $V$ possess a regular
involution $\theta$, which acts non trivially on the fibres of
$\varphi$, such that the restriction of $\theta$ on $\mathbb{F}$
coincides with $\sigma$.

Further, \cite[Theorem 3.3]{Kollar-Mori}, \eqref{hurwitz} and
Lemma~\ref{theorem:case-d-3-zero} imply that the linear system
$|-rK_{V}|$, $r \gg 0$, gives a birational morphism $\psi: V
\longrightarrow X$ such that $\psi$-exceptional locus is the curve
$\varphi^{-1}(C)$ and $X$ is a Fano threefold with canonical
Gorenstein singularities which possesses a regular involution
$\tau$, the restriction of $\theta$.

It follows from \eqref{hurwitz} and
Proposition~\ref{theorem:linear-systems-D-M} that $S: =
\varphi^{*}(M_{0}) \in |-K_{V}|$ is a smooth $\mathrm{K3}$ surface
with a free action of involution $\theta$ such that $S \cap
\varphi^{-1}(C) = \emptyset$. This implies that $\psi(S) \in
|-K_{X}|$ is a smooth $\mathrm{K3}$ surface with a free action of
involution $\tau$. Finally, according to \cite{CPS}, the linear
system $|-K_{X}|$ gives a morphism which is not an embedding. This
completes the construction of Fano threefolds, which satisfy the
conditions of Theorem~\ref{theorem:main}, for $(d_{1},d_{2})$ in
\eqref{possibilities-for-d-i}.

Theorem~\ref{theorem:main} is completely proved.

\end{document}